\newcommand\+{\;\lower\plusheight\hbox{$+$}\;}
\newcommand\lldots{\;\lower\plusheight\hbox{$\cdots$}\;}
\newcommand{\Z}{\mathbb{Z}}
\newcommand{\Q}{\mathbb{Q}}
\newcommand{\RR}{\mathbb{R}}
\newcommand{\D}{\mathbb{D}}
\newtheorem{Theorem}{Theorem}[section]
\newtheorem{Lemma}[Theorem]{Lemma}
\newtheorem{Corollary}[Theorem]{Corollary}
\newtheorem{Definition}[Theorem]{Definition}
\newtheorem{Example}[Theorem]{Example}
\newtheorem{Remark}[Theorem]{Remark}
\newtheorem{Proposition}[Theorem]{Proposition}
\def\Re{\mathop{\rm Re}\nolimits}
\def\R{\mathop{\rm R}\nolimits}
\newdimen\plusheight
\newdimen\minusheight
\newdimen\cdotsheight
\begin{document}

\title{Gross-Zagier Type CM Value Formulas on $X_{0}^{*}(p)$}
\author{Dongxi Ye}
\dedicatory{To my father Yuanchang Ye on the occasion of his 55th birthday}
\address{
Department of Mathematics, University of Wisconsin\\
480 Lincoln Drive, Madison, Wisconsin, 53706 USA.}
\email{lawrencefrommath@gmail.com}
\subjclass[2010]{11F03; 11F11; 11F27; 11G15; 11G18}
\keywords{Gross-Zagier CM value formula, small CM value formula, CM-cycle, Hauptmodul, Hilbert class polynomial.}
\maketitle
\begin{abstract}
In this work, we derive Gross-Zagier type CM value formulas for Hauptmoduls $j_{p}^{*}(\tau)$ on Fricke groups $\Gamma_{0}^{*}(p)$. We also illustrate how to employ these formulas to obtain certain Hilbert class polynomials.
\end{abstract}
\noindent
\numberwithin{equation}{section}
\allowdisplaybreaks
\section{Introduction}
\label{intro}

Denote by $\mathbb{H}$ the upper half plane, i.e., the set of complex numbers with positive imaginary part, and let $j(\tau)$ be the famous modular $j$-invariant defined on $\mathbb{H}$. As a modular function, it can be seen that $j(\tau)$ generates the function field of the compactified modular curve $X(1)$ on $\rm{SL}_{2}(\mathbb{Z})$, the multiplicative group of $2\times2$ matrices over $\mathbb{Z}$ of determinant one. In the algebraic respects, the value of $j(\tau)$ at an imaginary quadratic point $\tau_{0}$ (usually called a CM point) generates some ring class field of the imaginary quadratic field $\mathbb{Q}(\tau_{0})$ (see, e.g., \cite{B, C, L, Sh}). For example, let $-d$ be a negative fundamental discriminant, and let $\tau_{Q}$ be the unique imaginary quadratic point in $\mathbb{H}$ arising from a quadratic form $Q(X,Y)=aX^{2}+bXY+cY^{2}$ of discriminant $-d$, then the value $j(\tau_{Q})$ generates the Hilbert class field (the maximal unramified abelian extension) of $\mathbb{Q}(\tau_{Q})$. More interestingly, the Galois conjugates of $j(\tau_{Q})$ are simply $j(\tau_{Q'})$, where $Q'$ runs over the equivalence classes of the set $\mathcal{Q}_{d}^{prim,+}$ of primitive and positive-definite quadratic forms of discriminant $-d$ modulo the action of ${\rm SL}_{2}(\mathbb{Z})$. Such values are algebraic integers of degree $h(-d)$, the class number of $\mathbb{Q}(\sqrt{-d})$, and are classically called  singular moduli (see, e.g., \cite{W, Z}). 

In their seminal work \cite{GZ}, Gross and Zagier consider the product
\begin{equation}
\label{gzprod}
\prod_{Q_{D}\in\mathcal{Q}_{D}^{prim,+}/\rm{SL}_{2}(\mathbb{Z})}\prod_{Q_{d}\in\mathcal{Q}_{d}^{prim,+}/\rm{SL}_{2}(\mathbb{Z})}\left(j(\tau_{Q_{D}})-j(\tau_{Q_{d}})\right)
\end{equation}
where $-D$ is a negative fundamental discriminant coprime to $-d$, which is the rational norm (up to sign) of the  difference of singular moduli $j(\tau_{Q_{D}})-j(\tau_{Q_{d}})$, so a rational integer, and they prove a remarkable formula that equivalently gives the factorization of this rational integer, namely,
$$
\prod_{Q_{D}\in\mathcal{Q}_{D}^{prim,+}/\rm{SL}_{2}(\mathbb{Z})}\prod_{Q_{d}\in\mathcal{Q}_{d}^{prim,+}/\rm{SL}_{2}(\mathbb{Z})}|j(\tau_{Q_{D}})-j(\tau_{Q_{d}})|^{2}=\prod_{\substack{x,n,n'\in\mathbb{Z}\\n,n'>0\\x^{2}+4nn'=dD}}n^{\epsilon(n')},
$$
where $\epsilon(n')$ is multiplicative, and $\epsilon(p)$ is defined via the local Hilbert symbol at a prime $p$, and here we have assumed $-d,-D<-4$ for simplicity. This formula beautifully reveals the arithmetic information encoded in the prime factor exponents of the rational integer \eqref{gzprod} via local Hilbert symbols, and we now call it Gross-Zagier CM value formula. 

Gross and Zagier give an analytic and an algebraic proof to their CM value formula, and since then, it has been reproved by various mathematicians by different methods (see, e.g., \cite{D, LV, S, YY}), of which an interesting one is from the point of view of Borcherds forms \cite{BO,K}. In his thesis \cite{S2}, Schofer uses regularized theta lift and Siegel-Weil formula to compute and express the trace of a Borcherds form on an orthogonal Shimura variety of type $\rm{O}(n,2)$ over a so-called small CM-cycle (the term ``small'' is used to distinguish it from the big CM-cycle \cite{BKY}) in terms of the Fourier coefficients of an incoherent Eisenstein series of weight one. Now we call the formula representing Schofer's result small CM value formula. This can be viewed as a generalization of Gross-Zagier formula in  view of that
\begin{equation}
\label{gzbor}
\prod_{Q_{d}\in\mathcal{Q}_{d}/\rm{SL}_{2}(\mathbb{Z})}\left(j(\tau)-j(\tau_{Q_{d}})\right)=\Psi(z;\vec{F})
\end{equation}
is a Borcherds form for $\rm{O}(1,2)$, and the rational norm \eqref{gzprod} turns out to be
$$
\prod_{Q_{D}\in\mathcal{Q}_{D}^{prim,+}/\rm{SL}_{2}(\mathbb{Z})}\prod_{Q_{d}\in\mathcal{Q}_{d}^{prim,+}/\rm{SL}_{2}(\mathbb{Z})}|j(\tau_{Q_{D}})-j(\tau_{Q_{d}})|=\prod_{z\in Z(U)_{K}}|\Psi(z;\vec{F})|
$$
where the product on the right is over a CM-cycle $Z(U)_{K}$ of the underlying Shimura variety for $\rm{O}(1,2)$ that can be identified with $\mathcal{Q}_{D}^{prim,+}/\rm{SL}_{2}(\mathbb{Z})$ on the left by some means. Then applying Schofer's small CM value formula to the product on the right and computing relevant Fourier coefficients of an incoherent Eisenstein series of weight one, one can obtain an equivalent version of Gross-Zagier CM value formula.

In this work, we aim to apply Schofer's small CM value formula to the $\Gamma_{0}^{*}(p)$ analogues of \eqref{gzbor} to extend Gross-Zagier formula to Hauptmoduls on $\Gamma_{0}^{*}(p)$, where $\Gamma_{0}^{*}(p)$ is the group, called Fricke group, generated by the Hecke group $\Gamma_{0}(p)$, i.e.,
$$
\Gamma_{0}(p)=\left\{\left.\begin{pmatrix}a&b\\c&d\end{pmatrix}\in\rm{SL}_{2}(\mathbb{Z})\right|\,c\equiv0\pmod{p}\right\},
$$
and the Fricke involution $\begin{pmatrix}0&-\frac{1}{\sqrt{p}}\\ \sqrt{p}&0\end{pmatrix}$. Recall that for a genus zero congruence subgroup $\Gamma$ of ${\rm SL}_{2}(\RR)$ commensurable with ${\rm SL}_{2}(\Z)$, the function field on $X(\Gamma)$ can be generated by a single modular function, and such function is called a Hauptmodul for $\Gamma$ if it has a unique simple pole of residue~1 at the cusp $i\infty$, i.e., it has Fourier expansion of the form $q^{-1/h}+c(0)+c(1)q^{1/h}+\cdots$ with $q=\exp(2\pi i\tau)$ at the cusp $i\infty$ where $h$ is the width of the cusp $i\infty$. It is known by the theory of complex multiplication (see, e.g., \cite{Sh}) that similar to that of $j(\tau)$, the value of $j_{p}^{*}(\tau)$ at some special imaginary quadratic point, called Heegner point, generates the Hilbert class field of the associated imaginary quadratic field, so it will be interesting to derive a Gross-Zagier type formula for $j_{p}^{*}(\tau)$ over Heegner points. Now we come to the main result of this work. 

\begin{Theorem}
\label{main}
Let $p$ be a prime such that the Fricke group $\Gamma^{*}_{0}(p)$ is of genus zero, and let $j_{p}^{*}(\tau)$ be a Hauptmodul on $\Gamma^{*}_{0}(p)$. Let $-d,-D<-4$ be two distinct negative fundamental discriminants (not necessarily coprime). Let $\mu,\beta\in\mathbb{Z}/2p\mathbb{Z}$, and  let $\mathcal{Q}_{D,p,\mu}^{prim,+}$ denote the set of primitive and positive-definite quadratic forms $aX^{2}+bXY+cY^{2}$ of discriminant $-D$ with $p|a$ and $b\equiv\mu\pmod{2p}$. The set $\mathcal{Q}_{d,p,\beta}^{prim,+}$ is defined similarly. For $y,n,B\in\mathbb{Z}$, write $g=\gcd(\mu,2p)$ and
$$
m(\beta,y,n)=\frac{d}{4p}-\frac{(g\mu\beta-2npD-2gpy)^{2}}{4g^{2}pD}.
$$
Then
\begin{align*}
&\log\left(\prod_{Q_{D}\in \mathcal{Q}^{prim,+}_{D,p,\mu}/\Gamma_{0}(p)}\prod_{Q_{d}\in \mathcal{Q}^{prim,+}_{d,p,\beta}/\Gamma_{0}(p)}|j_{p}^{*}(\tau_{Q_{D}})-j_{p}^{*}(\tau_{Q_{d}})|^{8}\right)\\
&=\sum_{\substack{n\in\mathbb{Z}\\0\leq y<D/g\\|g\mu\beta-2npD-2gpy|<g\sqrt{dD}}}2^{o(m(\beta,y,n))}\Bigg[\sum_{\substack {\text{$q$ inert in $k$}\\q\in{\rm Diff}\left(m(\beta,y,n)\right)}}\left({\rm ord}_{q}(m(\beta,y,n))+1\right)\rho(m(\beta,y,n)D/q)\log{q}\\
&\hspace{7cm}+\sum_{\substack{q|D\\q\in{\rm Diff}(m(\beta,y,n))}}{\rm ord}_{q}(m(\beta,y,n))\rho(m(\beta,y,n)D)\log{q}\Bigg]\\
&\quad+\sum_{\substack{n\in\mathbb{Z}\\0\leq y<D/g\\|g\mu(-\beta)-2npD-2gpy|<g\sqrt{dD}}}2^{o(m(-\beta,y,n))}\Bigg[\sum_{\substack {\text{$q$ inert in $k$}\\q\in{\rm Diff}\left(m(-\beta,y,n)\right)}}\left({\rm ord}_{q}(m(-\beta,y,n))+1\right)\rho(m(-\beta,y,n)D/q)\log{q}\\
&\hspace{7cm}+\sum_{\substack{q|D\\q\in{\rm Diff}(m(-\beta,y,n))}}{\rm ord}_{q}(m(-\beta,y,n))\rho(m(-\beta,y,n)D)\log{q}\Bigg],
\end{align*}
where $o(m)$, ${\rm Diff}(m)$ and $\rho(m)$ are defined in Theorem \ref{coeff}.

\end{Theorem}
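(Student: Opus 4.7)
The plan is to follow the Borcherds-Schofer program sketched in the introduction, transferring it from $X(1)$ to $X_{0}^{*}(p)$. First I would realize the product $\prod_{Q_{d}\in\mathcal{Q}^{prim,+}_{d,p,\beta}/\Gamma_{0}(p)}(j_{p}^{*}(\tau)-j_{p}^{*}(\tau_{Q_{d}}))$ as (the evaluation of) a Borcherds lift $\Psi(\tau;\vec{F})$ on the signature $(1,2)$ Shimura variety attached to $\Gamma_{0}^{*}(p)$. The input $\vec{F}$ is a weight $1/2$ weakly holomorphic vector-valued form for the Weil representation of a suitable even lattice $L$ with discriminant group $L'/L\cong\mathbb{Z}/2p\mathbb{Z}$, whose non-zero principal parts sit in the components indexed by $\pm\beta$ and have exponent $-d/4p$.

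Next I would match the indexing set $\mathcal{Q}^{prim,+}_{D,p,\mu}/\Gamma_{0}(p)$ with a small CM-cycle $Z(U)_{K}$ on the same Shimura variety, where $K=\mathbb{Q}(\sqrt{-D})$. The cosets $\mu$ and $-\mu$ both enter because the positive-definite plane determining the CM-cycle admits two orientations, and this is what eventually produces the two parallel sums involving $m(\beta,y,n)$ and $m(-\beta,y,n)$ in the statement. Schofer's small CM value formula then expresses $\sum_{z\in Z(U)_{K}}\log|\Psi(z;\vec{F})|$ as a pairing of the principal part of $\vec{F}$ with Fourier coefficients, indexed by negative-norm elements of $K$, of the central derivative of an incoherent weight-one Eisenstein series for the quadratic space $K$.

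The main computational task is evaluating the $m$-th Fourier coefficient of this derivative Eisenstein series. Writing a CM vector and projecting against the component $\beta$ (respectively $-\beta$) yields the normalized representation number $m(\beta,y,n)$, and the condition $|g\mu\beta-2npD-2gpy|<g\sqrt{dD}$ is exactly the requirement that this number is a positive rational less than $d/(4p)$, i.e., a valid Fourier index for the principal part of $\vec{F}$. At each finite prime $q$ the local Whittaker integral factors into: a vanishing contribution when $q$ is split in $K$; the term $({\rm ord}_{q}(m)+1)\rho(mD/q)\log q$ when $q$ is inert and $q\in{\rm Diff}(m)$; and ${\rm ord}_{q}(m)\rho(mD)\log q$ when $q\mid D$. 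The factor $2^{o(m)}$ absorbs the local multiplicative constants at ramified and archimedean places; these are precisely the coefficients collected in Theorem \ref{coeff}. Assembling them over the indices $(\beta,y,n)$ and $(-\beta,y,n)$ produces the right-hand side, with the exponent $8$ on the left coming from the combined normalization of $|\cdot|^{2}$ in Schofer's formula, the degree of the cover $X_{0}(p)\to X_{0}^{*}(p)$, and the Borcherds lift normalization at level $p$.

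The hardest step will be the bookkeeping in the passage from Schofer's Fourier coefficient formula to the explicit expression $m(\beta,y,n)$, since one must carefully track the level-$p$ lattice data and the precise CM embedding $K\hookrightarrow L\otimes\mathbb{Q}$ so that the range $0\leq y<D/g$, together with the summation $n\in\mathbb{Z}$, lines up exactly with the coefficients produced by Theorem \ref{coeff}. Verifying the correct overall normalization is the main place where mistakes can slip in; the remainder is a mechanical, if lengthy, specialization of the small CM value formula to the Fricke setting.
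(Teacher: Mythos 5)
Your proposal follows essentially the same route as the paper: realize $\prod_{Q_{d}}(j_{p}^{*}(\tau)-j_{p}^{*}(\tau_{Q_{d}}))$ as a Borcherds form for a level-$p$ lattice with $L'/L\cong\mathbb{Z}/2p\mathbb{Z}$ (this is Kim's theorem, quoted as Theorem \ref{kim}), identify the small CM-cycle with the Heegner points of $\mathcal{Q}^{prim,+}_{D,p,\mu}/\Gamma_{0}(p)$, apply Schofer's formula, and then grind out the lattice projections to turn $K_{\eta}(d/4p)$ into the explicit sums over $(y,n)$ via Theorem \ref{coeff}. One bookkeeping point is misattributed, and it is exactly the kind of slip that costs a factor of $2$ if executed blindly: the two parallel sums in $m(\beta,y,n)$ and $m(-\beta,y,n)$ do \emph{not} come from the two orientations of the negative $2$-plane. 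They come from the principal part of the input form, which is forced to be $q^{-d/4p}\phi_{\beta}+q^{-d/4p}\phi_{-\beta}$ (the Weil-representation symmetry $c(m,\eta)=c(m,-\eta)$), so that Schofer's pairing produces $K_{\beta}(d/4p)+K_{-\beta}(d/4p)$ as in Proposition \ref{proplog}. The two orientations of $U_{x}$ (equivalently, the union $L_{\mu}\sqcup L_{-\mu}$, i.e.\ positive- and negative-definite forms) are responsible instead for the CM-cycle being $2\sum_{Q_{D}}[\tau_{Q_{D}}]$ in Proposition \ref{ZU}, which is one of the three factors of $2$ in the exponent $8$ (the other two being the square in Kim's theorem and the $|\Psi|^{2}$ in Schofer's formula). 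With that correction your accounting of the exponent and of the constant $2^{o(m)}=2^{o(m)-1}w(k)$ matches the paper's.
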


\begin{Remark}
The condition $-d,-D<-4$ is only for simplicity, and in general is not necessary.
\end{Remark}

\begin{Remark}
Similar Gross-Zagier type formulas for Hauptmoduls $j_{p}(\tau)$ on $\Gamma_{0}(p)$ are derived in \cite{YY} by Yang and Yin for $p=2$ and in another recent work of the author \cite{Y} for $p\geq3$ using the so-called big CM value formula \cite{BKY}, while the Gross-Zagier type formulas obtained in \cite{YY,Y} are over some big CM-cycles which are sets of some CM points related to some ring class group of conductor $p$ instead of Heegner points. This distinction results from the difference between the computations of  the so-called big CM-cycle and the small CM-cycle \cite{BKY}.  The reason for why the small CM value formula may not work for $j_{p}(\tau)$ over a small CM-cycle is briefly explained in Remark \ref{rem2}.
\end{Remark}

As we have mentioned previously that for a Heegner point $\tau_{Q_{d}}$ of discriminant $-d$, the value $j_{p}^{*}(\tau_{Q_{d}})$ generates the Hilbert class field of $\mathbb{Q}(\sqrt{-d})$ and its Galois conjugates are $j_{p}^{*}(\tau_{Q_{d}'})$ as $Q_{d}'$ ranges over the equivalence classes of $\mathcal{Q}_{d,p,\beta}^{prim,+}$ modulo $\Gamma_{0}(p)$, so the minimal polynomial of $j_{p}^{*}(\tau_{Q_{d}})$ is simply
$$
Y=\prod_{Q_{d}\in\mathcal{Q}_{d,p,\beta}^{prim,+}/\Gamma_{0}(p)}\left(X-j_{p}^{*}(\tau_{Q_{d}})\right),
$$
and thus is a Hilbert class polynomial of $\mathbb{Q}(\sqrt{-d})$. Constructions of Hilbert class polynomials have been intensively studied (see, e.g., \cite{BBEL, Bo, En, Su}). At the end of this work, we will also illustrate how Theorem~\ref{main} is employed to compute and construct certain Hilbert class polynomials. One example we obtain is
\begin{equation}
\label{ex1}
Y=X^{4}-X^{3}+2X^{2}-2X+1,
\end{equation}
which is a Hilbert class polynomial of $\mathbb{Q}(\sqrt{-39})$, and whose root
$$
X=\frac{1}{4}-\frac{1}{4}\sqrt{-3}+\frac{1}{4}\sqrt{-10+6\sqrt{-3}}
$$
 generates the Hilbert class field of $\mathbb{Q}(\sqrt{-39})$.

\begin{Remark}
As pointed out by an anonymous reader, computing class polynomials using Hauptmoduls $j_{p}^{*}(\tau)$ for $\Gamma_{0}^{*}(p)$ is not new and has been intensively studied by Enge and Sutherland \cite{ES} via CRT method. In addition, in contrast to the limitation (see \eqref{limit}) in our way, Enge and Sutherland have shown how the CRT method can be used to compute class polynomials induced from $j_{p}^{*}(\tau)$ for negative discriminants with arbitrarily large class numbers. Later, more sophisticated methods were developed and used by Sutherland \cite{SU} to compute new examples of class polynomials for these class invariants on $X_{0}^{*}(p)$. The example \eqref{ex1} can be obtained by implementing the algorithms described in the work mentioned above.
\end{Remark}

This work is organized as follows. In Section \ref{adel}, we review the adelic formulation of the theory of Borcherds forms, and state the $\Gamma_{0}^{*}(p)$ extensions of \eqref{gzbor}. In Section \ref{schofer}, we state Schofer's small CM value formula and other related concepts such as CM-cycle and Fourier coefficients of an incoherent Eisenstein series. In Section 4, we show how to identify a CM-cycle in consideration with a set of imaginary quadratic points, and do some necessary lattice computations. Proof of Theorem~\ref{main} is given at the end of Subsection~4.1. In the last section, we explain how to apply Theorem~\ref{main} to construct certain Hilbert class polynomials.

\section{Adelic Formulation of Borcherds Forms}
\label{adel}
In this section, we review the theory of Borcherds forms in the adelic setting \cite{K} and state the $\Gamma_{0}^{*}(p)$ analogues of \eqref{gzbor}.

\subsection{Rational Quadratic Space}
Let $V$ be a vector space with quadratic form $Q$ of signature $(n,2)$. For a $\mathbb{Q}$-algebra $F$, we write $V(F)=V\otimes_{\mathbb{Q}}F$. Let $\mathbb{D}$ denote the Grassmannian of oriented negative 2-planes of $V(\mathbb{R})$. Then $\mathbb{D}$ is a symmetric space for $O(n,2)$ and has a Hermitian structure. It can be viewed as an open subset $\mathcal{L}$ of a quadric in $\mathbb{P}^{1}(V)(\mathbb{C})$. Explicitly,
$$
\mathbb{D}\cong\mathcal{L}:=\{w\in\mathbb{P}^{1}(V)(\mathbb{C})|\,(w,w)=0,\,(w,\bar{w})<0\}/\mathbb{C}^{\times}
$$
where the isomorphism is given by $[x,-y]\to x+iy$ for a properly oriented basis $[x,-y]$. Let $H={\rm GSpin}(V)$ be the general spin group of $V$. Let $\mathbb{A}$ be the adele ring over $\mathbb{Q}$ and $\mathbb{A}_{f}$ be the associated finite adele ring. Assume $K$ to be an open compact subgroup of $H(\mathbb{A}_{f})$ such that $H(\mathbb{A})=H(\mathbb{Q})H(\mathbb{R})^{+}K$, where $H(\mathbb{R})^{+}$ is the identity component of $H(\mathbb{R})$. Define
$$
X_{K}:=H(\mathbb{Q})\backslash\left(\mathbb{D}\times H(\mathbb{A}_{f})/K\right).
$$
This is the set of complex points of a quasi-projective variety rational over $\mathbb{Q}$, and if $\Gamma_{K}=H(\mathbb{Q})\cap H(\mathbb{R})^{+}K$, then $X_{K}\cong \Gamma_{K}\backslash \mathbb{D}^{+}$ via $[z,h]\to[\gamma^{-1}z]$, where $\mathbb{D}^{+}\subset\mathbb{D}$ is the subset of positively oriented negative 2-planes, and $h=\gamma k$ for some $\gamma\in H(\mathbb{Q})^{+}$ and $k\in K$ by the strong approximation theorem.

Assume that 
$$
V(\mathbb{R})=V_{0}+\mathbb{R}e+\mathbb{R}f
$$
where $e$ and $f$ are such that $Q(e)=Q(f)=0$ and $(e,f)=1$. Then the signature of $V_{0}$ is $(n-1,1)$ and for the negative cone 
$$
\mathcal{C}=\{y\in V_{0}|\, Q(y)<0\},
$$
we have
$$
\mathbb{D}\cong \mathcal{H}:=\{z\in V_{0}(\mathbb{C})|\, {\rm Im}(z)\in\mathcal{C}\}.
$$
The isomorphism is given by $z\to w(z):=e-Q(z)f+z$ composed with projection to $\mathcal{L}$. The map $z\to w(z)$ can be viewed as a holomorphic section of $\mathcal{L}$.

\begin{Example}
\label{example}
Let $V=\{X\in {\rm M}_{2}(\mathbb{Q})|\,{\rm tr}(X)=0\}$ with quadratic form $Q(X)=N\det(X)$ of signature $(1,2)$. Then $\mathbb{D}\cong \mathbb{H}^{\pm}$ and $H={\rm GSpin}(V)={\rm GL}_{2}$. Now let 
$$
K=\left\{\left.\begin{pmatrix}a&b\\c&d\end{pmatrix}\in{\rm GL}_{2}(\hat{\mathbb{Z}})\right|\,c\equiv0\pmod{N}\right\}.
$$
Then we have $H(\mathbb{Q})\cap H(\mathbb{R})^{+}K=\Gamma_{0}(N)$ and $X_{K}\cong \Gamma_{0}(N)\backslash\mathbb{H}=Y_{0}(N)$ via $[z,hk]\to [h^{-1}\cdot z]$.

\end{Example}

\begin{Definition}
A modular form on $\mathbb{D}\times H(\mathbb{A}_{f})$ of weight $k$ is a meromorphic function $f:\mathbb{D}\times H(\mathbb{A}_{f})\to\mathbb{C}$ such that
\begin{enumerate}[(1)]
\item{$f(z,hk)=f(z,h)$ for all $k\in K$,}
\item{$f(\gamma{z},\gamma{h})=j(\gamma,z)^{k}f(z,h)$ for all $\gamma\in H(\mathbb{Q})$, where $j(\gamma,z)$ is the automorphy factor induced by the isomorphism $w$.}
\end{enumerate}
\end{Definition}

\subsection{Regularized Theta Lift and Borcherds Form}
For $z\in\mathbb{D}$, let ${\rm pr}_{z}:V(\mathbb{R})\to z$ be the projection map, and for $x\in V(\mathbb{R})$, let $R(x,z)=-({\rm pr}_{z}(x),{\rm pr}_{z}(x))$. Then we define 
$$
(x,x)_{z}=(x,x)+2R(x,z),
$$
and our Gaussian for $V$ is the function
$$
\varphi_{\infty}(x,z)=e^{-\pi(x,x)_{z}}.
$$
For $\tau\in\mathbb{H}$ with $\tau=u+iv$, let
$$
g_{\tau}=\begin{pmatrix}1&u\\0&1\end{pmatrix}\begin{pmatrix}v^{\frac{1}{2}}&0\\0&v^{-\frac{1}{2}}\end{pmatrix},
$$
and $g_{\tau}'=(g_{\tau},1)\in {\rm Mp}_{2}(\mathbb{R})$. Let $l=\frac{n}{2}-1$, $G={\rm SL}_{2}$ and $\rho$ be the Weil representation of the metaplectic group $G_{\mathbb{A}}'$ on $\mathcal{S}(V(\mathbb{A}_{f}))$, the Schwartz space of $V(\mathbb{A}_{f})$. Then for the linear action of $H(\mathbb{A}_{f})$ we write $\rho(h)\varphi(x)=\rho(h^{-1}x)$ for $\varphi\in\mathcal{S}(V(\mathbb{A}_{f}))$. For $z\in\mathbb{D}$ and $h\in H(\mathbb{A}_{f})$, we have the linear functional on $\mathcal{S}(V(\mathbb{A}_{f}))$ given by
$$
\varphi\to\theta(\tau,z,h;\varphi):=v^{-\frac{1}{2}}\sum_{x\in V(\mathbb{Q})}\rho(g_{\tau}')\left(\varphi_{\infty}(\cdot,z)\otimes\rho(h)\varphi\right)(x).
$$

Let $L$ be a lattice of $V$, and let $L'$ be the dual lattice of $L$ defined by
$$
L'=\{x\in V|\,(x,L)\subset\mathbb{Z}\}.
$$
Let $\mathcal{S}_{L}$ be the subspace of $\mathcal{S}(V(\mathbb{A}_{f}))$ consisting of functions with support in $\hat{L}'$ and constant on cosets of $\hat{L}$, where $\hat{L}=L\otimes_{\mathbb{Z}}\hat{\mathbb{Z}}$. Then 
$$
\mathcal{S}_{L}=\bigoplus_{\eta\in L'/L}\mathbb{C}\phi_{\eta},\quad \phi_{\eta}={\rm Char}(\eta+\hat{L}).
$$

Let $\Gamma'={\rm Mp}_{2}(\mathbb{Z})$ be the full inverse image of ${\rm SL}_{2}(\mathbb{Z})\subset G(\mathbb{R})$ in ${\rm Mp}_{2}(\mathbb{R})$. 
\begin{Definition}
A function $\vec{F}:\mathbb{H}\to \mathcal{S}_{L}$ is a weakly holomorphic modular form of weight $1-\frac{n}{2}$ and type $\rho_{L}$ for $\Gamma'$ if
\begin{enumerate}[(i)]
\item{$\vec{F}(\gamma'\tau)=(c\tau+d)^{1-\frac{n}{2}}\rho_{L}(\gamma')\vec{F}(\tau)$ for all $\gamma'\in\Gamma'$,}
\item{$\vec{F}(\tau)$ has a Fourier expansion
$$
\vec{F}(\tau)=\sum_{\eta\in L'/L}\sum_{\substack{m\in Q(\eta)+\mathbb{Z}\\m\gg-\infty}}c(m,\eta)q^{m}\phi_{\eta}
$$
where the condition $m\equiv Q(\eta)\pmod{\mathbb{Z}}$ follows from the transformation law for $T'\to\begin{pmatrix}1&1\\0&1\end{pmatrix}$.
}
\end{enumerate}
\end{Definition}
For the theta function
$$
\theta(\tau,z,h)=\sum_{\mu\in L'/L}\theta(\tau,z,h;\phi_{\mu}),
$$
we can pair it with $\vec{F}(\tau)$ by the following  $\mathbb{C}$-bilinear pairing
$$
\langle \vec{F}(\tau),\theta(\tau,z,h)\rangle=\sum_{\mu\in L'/L}\sum_{m\in Q(\mu)+\mathbb{Z}}c(m,\mu)q^{m}\theta(\tau,z,h;\phi_{\mu}).
$$
Using this pairing, we define a regularized integral as in \cite{BO}, called theta lift,
$$
\Phi(z,h;\vec{F}):=\underset{s=0}{{\rm CT}}\left\{\lim_{t\to\infty}\int_{\mathcal{F}_{t}}\langle \vec{F}(\tau),\theta(\tau,z,h)\rangle v^{-2}dudv\right\}
$$
where $\underset{s=0}{{\rm CT}}$ denotes the constant term in the Laurent expansion at $s=0$ of 
$$
\lim_{t\to\infty}\int_{\mathcal{F}_{t}}\langle \vec{F}(\tau),\theta(\tau,z,h)\rangle v^{-2}dudv,
$$
$\mathcal{F}_{t}$ is the truncated fundamental domain defined by
$$
\mathcal{F}_{t}:=\{\tau\in\mathcal{F}|\, {\rm Im}(\tau)\leq t\}
$$
and $\mathcal{F}$ is the usual fundamental domain for the action of ${\rm SL}_{2}(\mathbb{Z})$ on $\mathbb{H}$.

\begin{Theorem}[Borcherds]
There is a meromorphic modular form $\Psi(z,h;\vec{F})$ of weight $\frac{1}{2}c(0,0)$ on $\mathbb{D}\times H(\mathbb{A}_{f})$ such that
$$
\Phi(z,h;\vec{F})=-2\log|\Psi(z,h;\vec{F})|^{2}|y|^{c(0,0)}-c(0,0)\left(\log(2\pi)+\Gamma'(1)\right)
$$
where $y={\rm Im}(z)$. Such a meromorphic modular form is called a Borcherds form arising from the regularized theta lift of a modular form $\vec{F}$.
\end{Theorem}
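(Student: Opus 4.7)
The plan is to follow Borcherds' original strategy, computing the regularized theta integral $\Phi(z,h;\vec{F})$ explicitly and matching it term-by-term with $-2\log|\Psi|^{2}|y|^{c(0,0)}$ for an infinite product $\Psi$ constructed from the principal part of $\vec{F}$. The key observation is that $\vec{F}$ decomposes as a principal part $P(\tau)=\sum_{\eta}\sum_{m<0}c(m,\eta)q^{m}\phi_{\eta}$ plus a holomorphic remainder, and each of the two pieces plays a distinct role: the principal part dictates the (Heegner) divisor of $\Psi$, while the constant term $c(0,0)$ governs the weight and the additive constant $c(0,0)(\log(2\pi)+\Gamma'(1))$.

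First I would unfold the integral against $\theta(\tau,z,h)$ by the Rankin--Selberg method. Writing the theta kernel as a sum over $x\in V(\mathbb{Q})$ and grouping by $\Gamma_{\infty}\subset {\rm SL}_{2}(\mathbb{Z})$ orbits reduces the integral over $\mathcal{F}_{t}$ to an integral over the strip $\{0\leq u\leq1,\,v\leq t\}$. The regularization $\underset{s=0}{\rm CT}$ is needed precisely because the resulting integral is only conditionally convergent: one inserts $v^{-s}$, performs analytic continuation in $s$, and extracts the constant term at $s=0$. This produces a finite expression with three distinct pieces: a sum indexed by lattice vectors of negative norm (coming from the principal part), a boundary contribution involving $\log|y|$, and a universal constant arising from the Mellin-type residues.

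Next I would recognize the contribution from the principal part as the logarithm of a Borcherds product. For each $(m,\eta)$ with $m<0$ and $c(m,\eta)\neq 0$, the sum over $x\in\eta+\hat{L}$ with $Q(x)=m$ collapses, inside a Weyl chamber, to
$$
\Psi(z,h;\vec{F})=e^{2\pi i(\rho,z)}\prod_{\lambda}\left(1-e^{2\pi i(\lambda,z)}\right)^{c(Q(\lambda),\lambda)},
$$
where the product runs over positive lattice vectors and $\rho$ is a Weyl vector determined by $\vec{F}$. The convergence of this product in the chamber, together with its meromorphic continuation across chamber walls, follows from standard Hecke-type bounds on $c(m,\eta)$ as $m\to-\infty$. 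The boundary term of the unfolded integral supplies the $|y|^{c(0,0)}$ factor, and the universal constant matches $c(0,0)(\log(2\pi)+\Gamma'(1))$ on the nose.

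Finally the transformation law is obtained by comparing both sides: $\Phi$ is manifestly $H(\mathbb{Q})$-invariant and $K$-invariant because $\theta(\tau,z,h)$ and $\vec{F}$ are; hence so is $\log|\Psi|^{2}|y|^{c(0,0)}$, which forces $\Psi$ to be a meromorphic modular form of weight $\tfrac{1}{2}c(0,0)$ with an automorphy factor read off from the local Weil-representation data. The main obstacle will be the regularization analysis: tracking the analytic continuation in $s$ carefully enough to recover both the exact constant $c(0,0)(\log(2\pi)+\Gamma'(1))$ and the $|y|^{c(0,0)}$ factor requires delicate Mellin-transform computations, and verifying that the Borcherds product has exactly the prescribed divisor across every chamber wall is the technical heart of the argument.
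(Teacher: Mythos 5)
The paper does not prove this statement: it is quoted as a black box, attributed to Borcherds and used via Kudla's adelic reformulation (the references \cite{BO} and \cite{K}), so there is no internal argument to compare yours against. Your sketch is a faithful outline of the strategy of Borcherds' original proof (unfold the regularized theta integral, isolate the principal-part contribution as the logarithm of an infinite product on a Weyl chamber, read off the weight from the $|y|^{c(0,0)}$ term), and the shape of the product formula and the identification $2k=c(0,0)$ are correct.

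That said, as a proof it has genuine gaps rather than merely deferred routine work. First, the conclusion that $\Psi$ is a meromorphic (single-valued) modular form requires the integrality hypothesis $c(m,\eta)\in\mathbb{Z}$ for $m<0$, which you never invoke; without it the ``product'' is only a multivalued expression. Second, the product converges only on a single Weyl chamber, and the heart of the theorem is precisely that $e^{-\Phi/4}$-type expressions on different chambers glue to one global meromorphic function whose divisor is the prescribed combination of special divisors; this rests on showing that $\Phi(z,h;\vec F)$ extends to a real-analytic function away from those divisors with logarithmic singularities of multiplicity $c(-m,\eta)$ along each $Z(m,\eta)$, and that the wall-crossing jumps are accounted for by the Weyl vector $\rho$. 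You name this as ``the technical heart'' but supply no argument for it, and it cannot be obtained from the Rankin--Selberg unfolding alone. Third, the exact constant $c(0,0)(\log(2\pi)+\Gamma'(1))$ comes from a specific evaluation of the $x=0$ term of the theta kernel against the regularizing factor $v^{-s}$ (a derivative of a Gamma factor at the point of continuation), not from a generic ``universal constant''; asserting that it ``matches on the nose'' is not a computation. So the proposal is a correct plan coinciding with the literature's proof, but the three steps above are exactly where the content lies and they are missing.
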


The following results due to Kim \cite{KI} give $\Gamma_{0}^{*}(p)$ extensions of \eqref{gzbor} for $j_{p}^{*}(\tau)$.
\begin{Theorem}[Kim]
\label{kim}
Let $p$ be the prime such that $\Gamma^{*}_{0}(p)$ is of genus zero, and let $j_{p}^{*}(\tau)$ be a Hauptmodul for $\Gamma^{*}_{0}(p)$. Let $V=\{X\in {\rm M}_{2}(\mathbb{Q})|\,{\rm tr}(X)=0\}$ with quadratic form $Q(X)=p\det(X)$ and $L=\begin{pmatrix}\mathbb{Z}&\mathbb{Z}\\p\mathbb{Z}&\mathbb{Z}\end{pmatrix}\cap V$. Then there is a weakly holomorphic modular form $\vec{F}_{d,p,\beta}(\tau)$ for $\rho_{L}$ with $c(0,\eta)=0$ and principal part $q^{-d}\phi_{\beta}+q^{-d}\phi_{-\beta}$ such that
$$
\prod_{Q_{d}\in\mathcal{Q}^{+}_{d,p,\beta}/\Gamma_{0}(p)}\left(j_{p}^{*}(\tau)-j_{p}^{*}(\tau_{Q_{d}})\right)^{\frac{2}{|\bar{\Gamma}_{0}(p)_{Q_{d}}|}}=\Psi(z,h;\vec{F}_{d,p,\beta})
$$
under the identification given in Example \ref{example}, where $\mathcal{Q}_{d,p,\beta}^{+}$ denote the set of positive definite quadratic forms $aX^{2}+bXY+cY^{2}$ of negative discriminant $-d$ with $p|a$, $b\equiv\beta\pmod{2p}$ and $-d=b^{2}-4ac$, and $\bar{\Gamma}_{0}(p)_{Q_{d}}$ is the stabilizer of $Q_{d}$ in $\Gamma_{0}(p)/\{\pm I\}$.
When $-d$ is a fundamental discriminant and $-d<-4$, we have
$$
\prod_{Q_{d}\in\mathcal{Q}^{prim,+}_{d,p,\beta}/\Gamma_{0}(p)}\left(j_{p}^{*}(\tau)-j_{p}^{*}(\tau_{Q_{d}})\right)^{2}=\Psi(z,h;\vec{F}_{d,p,\beta}).
$$
\end{Theorem}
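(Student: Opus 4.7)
The plan is to prove this theorem by the standard Borcherds-product recipe: (i) construct a vector-valued weakly holomorphic modular form $\vec{F}_{d,p,\beta}$ of weight $1-n/2=1/2$ and type $\rho_L$ realizing the prescribed principal part; (ii) apply the Borcherds theorem quoted just above to produce the meromorphic modular form $\Psi(z,h;\vec{F}_{d,p,\beta})$, whose weight is $c(0,0)/2=0$; and (iii) match its divisor on $X_K\cong Y_0(p)$ (Example \ref{example} with $N=p$) against the divisor of the explicit product on the left. For step (i), I would invoke Borcherds's existence criterion via Serre duality: a principal part is realized by a weakly holomorphic form of weight $1/2$ and type $\rho_L$ if and only if it pairs trivially with every holomorphic cusp form of dual weight $3/2$ and type $\rho_L^{*}$. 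The symmetrized principal part $q^{-d}\phi_{\beta}+q^{-d}\phi_{-\beta}$ lies in the plus-eigenspace of the natural involution $\mu\mapsto -\mu$ of $\mathcal{S}_L$, which forces the relevant pairing into the complementary eigenspace and produces vanishing against symmetric cusp forms; alternatively one can exhibit $\vec{F}_{d,p,\beta}$ directly as a Maass--Poincar\'e series of appropriate index, or transfer from a scalar-valued weakly holomorphic form in Kohnen's plus-space on $\Gamma_{0}(4p)$ via the standard isomorphism.

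Step (ii) is a direct invocation of the Borcherds theorem, so the real work is in step (iii). The Borcherds divisor formula expresses $\mathrm{div}(\Psi)$ as the sum, over $\mu\in L'/L$ and $m>0$ with $c(-m,\mu)\neq 0$, of Heegner divisors $Z(m,\mu)$ parameterizing points $z\in\mathbb{D}$ for which some $x\in L+\mu$ with $Q(x)=m$ is perpendicular to $z$; for our principal part only $Z(d,\beta)+Z(d,-\beta)$ contributes. Unpacking $L=\begin{pmatrix}\mathbb{Z}&\mathbb{Z}\\ p\mathbb{Z}&\mathbb{Z}\end{pmatrix}\cap V$ with $Q=p\det$, a direct calculation matches a primitive vector $x\in L+\beta$ of the correct norm with the data of a primitive positive-definite quadratic form $aX^{2}+bXY+cY^{2}$ of discriminant $-d$ satisfying $p\mid a$ and $b\equiv\beta\pmod{2p}$, with the orthogonality $x\perp z$ specializing to $z=\tau_Q$. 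An orbit--stabilizer argument for the $\Gamma_{0}(p)$ action (equivalently the action of $K$ in the adelic setup) upgrades this pointwise identification to
\[
\mathrm{div}(\Psi)=\sum_{Q_d\in\mathcal{Q}^{+}_{d,p,\beta}/\Gamma_{0}(p)}\frac{2}{|\bar{\Gamma}_{0}(p)_{Q_d}|}\,\tau_{Q_d}\;-\;(\text{contributions at cusps}),
\]
the factor $2$ coming from the $\pm\beta$ symmetrization. The divisor of the left-hand product is manifestly the same, so the ratio of the two sides is a nowhere-zero holomorphic modular function of weight $0$, hence a constant; comparing leading terms of the $q$-expansion at $i\infty$ (both sides are monic in the local parameter $q$, since $j_{p}^{*}(\tau)=q^{-1}+O(1)$ and Borcherds products are normalized monic) forces this constant to be $1$. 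The simpler second identity then follows because under $-d<-4$ no $Q_d$ has an exceptional stabilizer, so $|\bar{\Gamma}_{0}(p)_{Q_d}|=1$ and the fractional exponent collapses to $2$.

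The hardest and most error-prone piece is step (iii): establishing a precise bijection between lattice vectors in $L'/L$ of the correct norm modulo $\Gamma_{0}(p)$ and primitive quadratic forms in $\mathcal{Q}^{+}_{d,p,\beta}/\Gamma_{0}(p)$, with correct multiplicities and equivariance under both $\Gamma_{0}(p)$ and the Fricke involution. The interplay between $L$ and its dual $L'$ produces factors of $p$ in the norm form that must be tracked carefully, and the $\pm\beta$ symmetrization in the principal part (geometrically the Fricke swap $(\beta)\leftrightarrow(-\beta)$ on forms) is a convenient place to lose or gain a factor of $2$; once these conventions are pinned down, everything else is a routine application of the Borcherds theorem.
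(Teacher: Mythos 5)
The paper offers no proof of this statement at all: it is quoted as a known result of Kim \cite{KI}, so the only question is whether your blind reconstruction would actually establish it. Your three-step outline (construct the input form, lift it, match divisors) is the standard route and is surely close to Kim's, but two steps have genuine gaps. The first is your existence argument in step (i). For the Weil representation attached to this lattice the components indexed by $\mu$ and $-\mu$ of \emph{every} form in the obstruction space are already forced to agree (the element $Z^2$ of the metaplectic group acts by $\phi_\mu\mapsto\phi_{-\mu}$ up to a root of unity determined by the signature, and for weight $3/2$ and the dual type that root of unity is $1$). Consequently the symmetrized principal part $q^{-d}(\phi_\beta+\phi_{-\beta})$ pairs with an obstruction cusp form $g=\sum b(m,\mu)q^m\phi_\mu$ to give $b(d,\beta)+b(d,-\beta)=2b(d,\beta)$, which does not vanish for symmetry reasons; the symmetrization keeps the pairing inside the eigenspace where the obstructions live rather than moving it to a complementary one. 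Existence (together with the extra requirement $c(0,\eta)=0$) therefore has to come from the explicit construction you mention only as an alternative: transfer from a scalar weight-$1/2$ form in the Kohnen plus space on $\Gamma_0(4p)$ or a Maass--Poincar\'e series, which is what Kim actually does.

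The second gap is in step (iii), where you write the divisor identity only up to ``contributions at cusps'' and then declare the ratio of the two sides to be a constant. The curve $X_0(p)$ has two cusps, $i\infty$ and $0$, and the ratio is constant only if the orders of the two sides agree at \emph{both}. This is exactly the delicate point of the whole theorem: the order of $\Psi$ at each cusp is governed by a Weyl vector, and (as the paper itself notes in Remark \ref{rem2}, citing \cite[Cor.~5.4]{BS}) a Borcherds form in this ${\rm O}(1,2)$ setup necessarily has the \emph{same} behavior at $i\infty$ and at $0$ --- which is precisely why the identity holds for the Fricke-invariant $j_p^*$ and cannot hold for $j_p$. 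Your argument never verifies this Fricke symmetry of $\Psi$ nor computes its order at the cusp $0$; comparing monic leading terms at $i\infty$ alone does not exclude an extra zero or pole at $0$. Once the Weyl vectors at both cusps are computed (or the Fricke-equivariance of the lift of the $\pm\beta$-symmetric input is established), the remaining divisor matching, the factor $2/|\bar{\Gamma}_0(p)_{Q_d}|$ bookkeeping, and the normalization are fine. A minor point you inherit from the statement without comment: the principal-part exponent must be $-d/4p$, not $-d$, to be congruent to $Q(\beta)$ modulo $\mathbb{Z}$.
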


\begin{Remark}
\label{rem2}
  Similar relation may not exist for a Hauptmodul $j_{p}(\tau)$ on $\Gamma_{0}(p)$. It is mainly due to that under the $\rm{O}(1,2)$ setup, the divisor of a Borcherds form must be a linear combination of the special divisors of the form
$$
\sum_{Q_{d}\in\mathcal{Q}_{d,p,\beta}^{+}/\Gamma_{0}(p)}\frac{1}{|\bar{\Gamma}_{0}(p)_{Q_{d}}|}[\tau_{Q_{d}}],
$$
and this partially determines some properties of the theta lift input $\vec{F}$, which somehow determines the behavior of the associated Borcherds form. However, by \cite[Cor. 5.4]{BS}, one can check that such a Borcherds form must have the same behavior at the cusps $i\infty$ and $0$ under the identification between $X_{K}$ and $Y_{0}(p)$. This could not happen to the function of the form
$$
\prod_{Q\in\mathcal{Q}/\Gamma_{0}(p)}\left(j_{p}(\tau)-j_{p}(\tau_{Q})\right)^{\frac{1}{|\bar{\Gamma}_{0}(p)_{Q}|}}
$$
for any set of binary quadratic forms $\mathcal{Q}$, since $j_{p}(\tau)$ has different behavior at the cusps $i\infty$ and $0$. Since similar Borcherds form ``relation'' may not exist for $j_{p}(\tau)$, the small CM value formula will not work.
\end{Remark}

\section{Small CM Value Formula}
\label{schofer}
In this section, we briefly review Schofer's small CM value formula \cite{S} and certain related key concepts, such as CM-cycle and Fourier coefficients of an incoherent Eisenstein series of weight 1.

\subsection{CM-Cycle} Assume that we have a rational splitting $V=V_{+}\oplus U$, where $V_{+}$ is of signature $(n,0)$ and $U$ is of signature $(0,2)$. Then $U$ gives rise to a two-point subset $\mathbb{D}_{0}$ of $\mathbb{D}$. Let $T={\rm GSpin}(U)$ and let $K_{T}=K\cap T(\mathbb{A}_{f})$.. Then there is an embedding $T\hookrightarrow H$ and we have a CM-cycle of $X_{K}$,
$$
Z(U)_{K}:=T(\mathbb{Q})\backslash\left(\mathbb{D}_{0}\times T(\mathbb{A}_{f})/K_{T}\right)\hookrightarrow X_{K},
$$
which is a $0$-cycle.
\subsection{Eisenstein Series and Small CM Value Formula} Assume that $V$ is of signature $(n,2)$ with $n$ even. Inside of $G_{\mathbb{A}}$, we have the subgroups
$$
N_{\mathbb{A}}:=\{n(b)|\,b\in\mathbb{A}\},\quad n(b)=\begin{pmatrix}1&b\\0&1\end{pmatrix},
$$
and 
$$
M_{\mathbb{A}}:=\{m(a)|\,a\in\mathbb{A}^{\times}\},\quad m(a)=\begin{pmatrix}a&0\\0&a^{-1}\end{pmatrix}.
$$
Define the quadratic character $\chi=\chi_{V}$ of $\mathbb{A}^{\times}/\mathbb{Q}^{\times}$ via the global Hilbert symbol by
$$
\chi(x)=(x,-\det(V)),
$$
where $\det(V)\in\mathbb{Q}^{\times}/(\mathbb{Q}^{\times})^{2}$ is the determinant of the matrix for the quadratic form $Q$ on $V$. For $s\in\mathbb{C}$, let $I(s,\chi)$ be the principal series representation of $G_{\mathbb{A}}$. This space consists of smooth functions $\Phi(s)$ on $G_{\mathbb{A}}$ such that 
$$
\Phi(n(b)m(a)g,s)=\chi(a)|a|^{s+1}\Phi(g,s).
$$
We have a $G_{\mathbb{A}}$-intertwining map
$$
\lambda=\lambda_{V}:\mathcal{S}(V(\mathbb{A}_{f}))\to I\left(\frac{n}{2},\chi\right),
$$
where $\lambda(\varphi)(g)=(\rho(g)\varphi)(0)$. If $K_{\infty}=SO(2)$ and $K_{f}={\rm SL}_{2}(\hat{\mathbb{Z}})$, then a section section $\Phi(s)\in I(s,\chi)$ is called standard if its restriction to $K_{\infty}K_{f}$ is independent of $s$. The function $\lambda(\varphi)$ has a unique extension to a standard section $\Phi(s)\in I(s,\chi)$ such that $\Phi(\frac{n}{2})=\lambda(\varphi)$. We let $P=MN$ and define the Eisenstein series associated to $\Phi(s)$ by
$$
E(g,z;\Phi)=\sum_{\gamma\in P(\mathbb{Q})\backslash G(\mathbb{Q})}\Phi(\gamma g,s).
$$
This series converges for $\Re(s)>1$ and has a meromorphic continuation to the whole $s$-plane.

For $r\in\mathbb{Z}$, let $\chi_{r}$ be the character of $K_{\infty}$ defined by
$$
\chi_{r}(k_{\theta})=e^{ir\theta},\quad k_{\theta}=\begin{pmatrix}\cos\theta&\sin\theta\\-\sin\theta&\cos\theta\end{pmatrix}\in K_{\infty}.
$$
Let $\phi: G(\mathbb{R})\to\mathbb{C}$ be a smooth function of weight $l$, meaning $\phi(gk_{\theta})=\chi_{l}(k_\theta)\phi(g)$. Let $\Phi^{l}_{\infty}(s)$ be the normalized eigenfunction of weight $l$ for $K_{\infty}$, i.e.,
$$
\Phi^{l}_{\infty}(gk,s)=\chi_{l}(k)\Phi(g,s).
$$
Now take $\Phi(s)=\Phi^{l}_{\infty}(s)\otimes \lambda(\varphi)$. By strong approximation, the series $E(g,s;\Phi)$ is determined by the Eisenstein series
$$
E(\tau,s;\varphi,l):=v^{-\frac{1}{2}}E(g_{\tau},s;\Phi^{l}_{\infty}\otimes\lambda(\varphi))
$$
which is a non-holomorphic modular form of weight $l$ on $\mathbb{H}$.

\begin{Definition}
\label{defE}
Consider $V=U$ of signature $(0,2)$ and view $U\cong\mathbb{Q}(\sqrt{-D})$ with discriminant $-D$. Let $\chi_{D}$ be the character of $\mathbb{Q}_{\mathbb{A}}^{\times}$ defined via the global Hilbert symbol by $\chi_{D}(x)=(x,-D)_{\mathbb{A}}$. For $\varphi\in\mathcal{S}(U(\mathbb{A}_{f}))$, let 
$$
E(\tau,s;\varphi,1)=\sum_{m\in\mathbb{Q}}A_{\varphi}(s,m,v)q^{m},
$$
where the Fourier coefficients have Laurent expansions
$$
A_{\varphi}(s,m,v)=b_{\varphi}(m,v)s+O(s^{2})
$$
at $s=0$. For $m\geq0$, define
$$
\kappa_{\varphi}(m)=A_{\varphi}'(0,m,v).
$$
\end{Definition}

Let $k=\mathbb{Q}(\sqrt{-D})$ be an imaginary quadratic field with fundamental discriminant $-D$, ring of integers $\mathcal{O}_{k}$ and different $\partial$. Let $\chi$ be the quadratic Dirichlet character associated to $k$. Let $\mathfrak{a}$ be a fractional ideal of $k$ and let $L=\mathfrak{a}$ be a lattice with integral quadratic form $Q(x)=-{\rm N}(x)/{\rm N}(\mathfrak{a})$. Then $L'=\partial^{-1}\mathfrak{a}$. The following theorem is well known (see, e.g., \cite{KY}).

\begin{Theorem}
\label{coeff}
Assume that $m>0$ and $m\in Q(\eta)+\mathbb{Z}$ for some $\eta\in L'/L$. Define $o(m)$ to be the number of primes $p|D$ such that ${\rm ord}_{p}(mD)>0$, and define
$$
\rho(m)=|\{\mbox{ideal $\mathfrak{b}\subset\mathcal{O}_{k}$}|\, {\rm N}(\mathfrak{b})=m\}|.
$$
 Let ${\rm Diff}(m)$ denote the set of primes $p<\infty$ such that $\chi_{p}(-m{\rm N}(\mathfrak{a}))=-1$. Then $\kappa_{\mu}(m)=0$ unless $|{\rm Diff}(m)|=1$. Assume that ${\rm Diff}(m)=\{p\}$. Then
\begin{enumerate}[(i)]
\item{if $p$ is inert in $k$, then
$$
\kappa_{\eta}(m)=-\frac{2^{o(m)-1}w(k)}{h(-D)}\left({\rm ord}_{p}(m)+1\right)\rho(mD/p)\log{p},
$$
}

\item{if $p$ is ramified in $k$, i.e., $p|D$, then
$$
\kappa_{\eta}(m)=-\frac{2^{o(m)-1}w(k)}{h(-D)}{\rm ord}_{p}(mD)\rho(mD)\log{p}.
$$
}
For $m=0$, one has
$$
\kappa_{\eta}(0)=\delta_{0,\eta}\left(\log{v}-2\frac{\Lambda'(0,\chi)}{\Lambda(0,\chi)}\right),
$$
where $\Lambda(s,\chi)$ is the complete $L$-function associated to $\chi$.
\end{enumerate}
\end{Theorem}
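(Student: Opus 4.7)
The plan is to compute the Fourier expansion of $E(\tau, s; \varphi, 1)$ by exploiting its adelic origin. Since the standard section $\Phi(s) = \Phi^{1}_{\infty}(s) \otimes \lambda(\varphi)$ is factorizable, the Bruhat decomposition $G = N \sqcup PwN$ combined with the usual unfolding expresses each non-constant Fourier coefficient as a product of local Whittaker integrals
\[
A_{\varphi}(s, m, v) \;=\; W_{m,\infty}(g_{\tau}, s)\prod_{\ell < \infty} W_{m,\ell}(s;\varphi_{\ell}).
\]
The first step is to identify each $W_{m,\ell}(s;\phi_{\eta})$, via the local Siegel--Weil identity, with the local representation density $\alpha_{\ell}(m, L_{\ell})$ of the rank-two lattice $L_{\ell} = \mathfrak{a}_{\ell}$. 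This density is nonzero at $s=0$ precisely when the local quadratic form represents $-m\,\mathrm{N}(\mathfrak{a})$, i.e., when $\chi_{\ell}(-m\,\mathrm{N}(\mathfrak{a})) \neq -1$, which is exactly the condition $\ell \notin \mathrm{Diff}(m)$.

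Next, since $U$ has signature $(0,2)$ the global section is \emph{incoherent}: the product of the local invariants in which the coherent completion of $U$ differs from $U$ itself is $-1$, hence $E(\tau, 0; \varphi, 1) \equiv 0$. The Hasse--Minkowski principle then forces $|\mathrm{Diff}(m)|$ to be odd and positive. If $|\mathrm{Diff}(m)| \geq 2$, at least two of the local factors $W_{m,\ell}(0;\phi_{\eta})$ vanish simultaneously, so $\kappa_{\eta}(m) = 0$. When $\mathrm{Diff}(m) = \{p\}$, only $W_{m,p}(s;\phi_{\eta})$ vanishes at $s=0$ and the Leibniz rule gives
\[
\kappa_{\eta}(m) \;=\; W_{m,\infty}(0)\cdot W'_{m,p}(0;\phi_{\eta}) \cdot \prod_{\ell \neq p,\infty} W_{m,\ell}(0;\phi_{\eta}).
\]

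The explicit formulas then follow by computing each piece. The archimedean weight-one factor for $m>0$ evaluates to an elementary constant through the usual $\Gamma$-integral, and produces together with the class number formula the global normalization $w(k)/h(-D)$. The product of the non-distinguished finite factors, evaluated at $s=0$, is recognized as $\rho(mD/p)$ (inert case) or $\rho(mD)$ (ramified case) by rewriting each $\alpha_{\ell}$ as a local counting function for $\mathcal{O}_{k}$-ideals; the power $2^{o(m)-1}$ arises from collapsing the $\mathrm{Diff}$-type ambiguity at the $o(m)$ primes dividing both $m$ and $D$ into genus-class counts. The derivative at the distinguished prime is the main remaining calculation: at an inert $p$, differentiating the local density of the anisotropic norm form on $\mathbb{Q}_{p}(\sqrt{-D})$ yields the multiplicity $\mathrm{ord}_{p}(m)+1$; at a ramified $p\mid D$, a parallel computation on the ramified quadratic extension yields $\mathrm{ord}_{p}(mD)$, in each case multiplied by $\log p$ from the derivative of $p^{-s}$ at $s=0$. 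For $m=0$, the constant term is handled via the intertwining integral for the standard section and Kronecker's limit formula, giving the $\log v - 2\Lambda'(0,\chi)/\Lambda(0,\chi)$ shape. The main obstacle is the bookkeeping at the ramified primes $p\mid D$, where $L'/L$ has $2$-torsion, the dual lattice contains half-integers, and the local Whittaker derivative is delicately sensitive both to the coset $\eta$ and to the chosen model of the anisotropic form; this is also where the $o(m)$ factor is most subtle and requires care to avoid double-counting against the explicit contribution of the prime $p$ itself.
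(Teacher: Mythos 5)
The paper does not actually prove this statement --- it quotes it as well known from Kudla--Yang \cite{KY} --- and your outline is a faithful summary of precisely the strategy used there: factorization of the Fourier coefficients into local Whittaker functions identified with local representation densities, incoherence of the collection attached to the $(0,2)$-space forcing $E(\tau,0;\varphi,1)=0$ and $|{\rm Diff}(m)|$ odd, vanishing of $\kappa_\eta(m)$ when $|{\rm Diff}(m)|\geq 2$ since at least two local factors vanish at $s=0$, the Leibniz rule isolating $W'_{m,p}(0)$ at the unique prime of ${\rm Diff}(m)$, and the constant term via the intertwining operator and the Kronecker limit formula. Your proposal is therefore correct in approach and matches the source the paper relies on, though it remains a roadmap: the explicit local Whittaker and density evaluations that actually produce the constants $2^{o(m)-1}w(k)/h(-D)$, $({\rm ord}_p(m)+1)$, and ${\rm ord}_p(mD)$ --- the bulk of the work in \cite{KY} --- are described but not carried out.
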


We end this section with Schofer's small CM value formula \cite[Theorem 3.1]{S}.
\begin{Theorem}[Schofer]
\label{scho}
Let $\vec{F}:\mathbb{H}\to\mathcal{S}_{L}$ be a weakly holomorphic modular form for $\rho_{L}$ of weight $1-\frac{n}{2}$ with Fourier expansion
$$
\vec{F}(\tau)=\sum_{\eta\in L'/L}\sum_{m\in Q(\eta)+\mathbb{Z}}c(m,\eta)q^{m}\phi_{\eta}
$$
and $c(0,0)=0$. For $V=V_{+}\oplus U$, where $V_{+}=U^{\perp}$ of signature $(n,0)$, write $L_{\pm}$ for $L\cap V_{+}$ and $L\cap U$, respectively. Let ${\rm pr}_{\pm}$ denote the projections of $V$ onto $V_{+}$ and $U$, respectively, and write $x_{\pm}$ for ${\rm pr}_{\pm}(x)$ for $x\in V$. Then
$$
\sum_{z\in Z(U)_{K}}\log|\Psi(z;\vec{F})|^{2}=-{|T(\mathbb{Q})\backslash T(\mathbb{A}_{f})/K_{T}|}\sum_{\eta\in L'/L}\sum_{m\geq0}c(-m,\eta)K_{\eta}(m)
$$
where
\begin{equation}
\label{KM}
K_{\eta}(m)=\sum_{\lambda\in L/(L_{+}+L_{-})}\sum_{\ell\in\eta_{+}+\lambda_{+}+L_{+}}\kappa_{\eta_{-}+\lambda_{-}}(m-Q(\ell)).
\end{equation}
\end{Theorem}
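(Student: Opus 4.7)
The plan is to combine Borcherds' theorem with the orthogonal splitting $V = V_+ \oplus U$ and the incoherent Siegel--Weil formula for the anisotropic torus $T = \mathrm{GSpin}(U)$. Since $c(0,0) = 0$, Borcherds' theorem reduces the left-hand side to $-\tfrac{1}{2}\sum_{z \in Z(U)_K} \Phi(z, h; \vec{F})$, so it suffices to compute the CM-average of the regularized theta lift. The strategy is to factor the theta kernel along the CM cycle, average the $U$-factor using Siegel--Weil, and then pair the resulting Eisenstein derivative against $\vec{F}$.

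For $z \in \mathbb{D}_0$ the negative 2-plane coincides with $U(\mathbb{R})$, hence $\mathrm{pr}_z = \mathrm{pr}_U$ and $R(x,z) = -(x_-, x_-)$; the archimedean Gaussian factors as $\varphi_\infty(x_+) \cdot \varphi_\infty(x_-)$. On the finite side, the discrepancy between $\hat{L}$ and $\hat{L}_+ + \hat{L}_-$ is indexed by the finite group $L/(L_+ + L_-)$, and the Schwartz function $\phi_\eta$ splits as $\sum_{\lambda} \phi^{L_+}_{\eta_+ + \lambda_+} \otimes \phi^{L_-}_{\eta_- + \lambda_-}$. Combining these factorizations, the theta kernel restricted to $\mathbb{D}_0 \times T(\mathbb{A}_f)$ becomes a finite sum of products $\theta_{V_+}(\tau; \phi^{L_+}_{\eta_+ + \lambda_+}) \cdot \theta_U(\tau, h; \phi^{L_-}_{\eta_- + \lambda_-})$ in which the first factor is a positive-definite theta series independent of $h$.

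The sum of $\theta_U(\tau, h; \varphi_-)$ over $T(\mathbb{Q})\backslash T(\mathbb{A}_f)/K_T$ is, by strong approximation on $T$ and the Siegel--Weil identity, equal to $|T(\mathbb{Q})\backslash T(\mathbb{A}_f)/K_T|$ times the value at $s=0$ of an incoherent Eisenstein series $E(\tau, s; \varphi_-, 1)$. Because $U$ has signature $(0,2)$, this Eisenstein series vanishes identically at $s=0$, so what survives in the pairing is the first Taylor coefficient, whose Fourier expansion is governed by the numbers $\kappa_{\eta_- + \lambda_-}(m)$ of Definition~\ref{defE}. Since $\theta_{V_+}$ is of rapid decay, the inner regularization in the theta lift produces no extra boundary contribution, and Rankin--Selberg-type unfolding of $E$ against $\vec{F} \cdot \theta_{V_+}$, followed by extracting $\mathrm{CT}_{s=0}$, reduces the expression to a pairing between the Fourier coefficients $c(-m, \eta)$ of $\vec{F}$ and the coefficients $\kappa$ of the Eisenstein derivative. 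The $q$-expansion $\theta_{V_+}(\tau; \phi^{L_+}_{\eta_+ + \lambda_+}) = \sum_{\ell \in \eta_+ + \lambda_+ + L_+} q^{Q(\ell)}$ shifts the Fourier index of $\kappa$ by $Q(\ell)$, and after summing over $\lambda$ one recovers exactly the expression $K_\eta(m)$.

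The main obstacle is the transition from the vanishing incoherent Eisenstein series at $s=0$ to its derivative. Concretely, one must interchange the $\mathrm{CT}_{s=0}$ regularization of the theta lift with the spectral parameter of the Siegel--Weil identity and verify that only the first-order term survives. This is handled by invoking Kudla's extended (derivative) Siegel--Weil identity in signature $(0,2)$, or equivalently by matching linear Taylor coefficients at $s=0$ on both sides, using the meromorphic continuation of $E(\tau,s;\varphi_-,1)$ and controlled growth of $\theta_U$ in $\tau$. Once that interchange is justified, the rest of the argument is termwise integration and finite bookkeeping with the coset decomposition $L/(L_+ + L_-)$, together with the standard Fourier-coefficient extraction for non-holomorphic weight-one Eisenstein series on $\mathbb{H}$.
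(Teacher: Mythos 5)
This statement is not proved in the paper at all: it is quoted verbatim as Schofer's small CM value formula, \cite[Theorem 3.1]{S}, so there is no internal argument to compare yours against. Judged on its own terms, your outline does reproduce the architecture of Schofer's actual proof --- reduce to the regularized theta lift via Borcherds' theorem using $c(0,0)=0$, split the theta kernel along $V=V_+\oplus U$ with the coset decomposition over $L/(L_++L_-)$, average the $U$-part over the CM cycle, and extract Fourier coefficients to land on \eqref{KM}. But two of your analytic steps are misstated in ways that would break a written-out proof.

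First, the Siegel--Weil input is not what you claim. The average of $\theta_U(\tau,h;\varphi_-)$ over $T(\mathbb{Q})\backslash T(\mathbb{A}_f)/K_T$ cannot equal ``the value at $s=0$ of an incoherent Eisenstein series,'' because that value is identically zero while the theta average is not; Siegel--Weil identifies the average with a \emph{coherent} Eisenstein series whose archimedean weight matches the signature $(0,2)$, namely weight $-1$. The incoherent weight-one series of Definition~\ref{defE} and its derivative $\kappa_{\eta_-+\lambda_-}(m)=A'(0,m,v)$ enter only after one relates the weight $-1$ and weight $+1$ families by the Maass lowering operator (which carries a factor of $s$) and then integrates by parts over the truncated domain $\mathcal{F}_t$; this is the actual mechanism, not a ``matching of Taylor coefficients,'' and your sketch never supplies it. Second, the claim that $\theta_{V_+}$ is of rapid decay and hence ``the inner regularization produces no extra boundary contribution'' is false: $\theta_{V_+}$ is a holomorphic positive-definite theta series with nonzero constant term, the integrand $\langle\vec F\,\theta_{V_+},\,\cdot\,\rangle$ still grows exponentially at the cusp because of the principal part of $\vec F$, and the $t\to\infty$ boundary terms are exactly where the regularization, the hypothesis $c(0,0)=0$, and the $m=0$ contributions $\kappa_{\eta_-+\lambda_-}(0)$ in $K_\eta(0)$ come from. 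As written, your argument would simply drop those terms. The lattice-splitting and bookkeeping portions of your proposal are fine and agree with how the present paper later uses \eqref{KM} in Section~4.
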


\section{Proof of Theorem \ref{main}}

Throughout this section, let $V=\{x\in {\rm M}_{2}(\mathbb{Q})|\, {\rm tr}(x)=0\}$ with quadratic form $Q(x)=p\det(x)$, and let 
$$
L=\left\{\begin{pmatrix}b&-\frac{a}{p}\\c&-b\end{pmatrix}|\,a,b,c\in\mathbb{Z}\right\}.
$$
The dual lattice is
$$
L'=\left\{\begin{pmatrix}\frac{b}{2p}&-\frac{a}{p}\\c&-\frac{b}{2p}\end{pmatrix}|\,a,b,c\in\mathbb{Z}\right\},
$$
and so
$$
L'/L=\left\{\begin{pmatrix}\frac{\eta}{2p}&0\\0&-\frac{\eta}{2p}\end{pmatrix}|\,\eta\in\mathbb{Z}/2p\mathbb{Z}\right\}.
$$
We write $\eta$, when there is no confusion, for  $\begin{pmatrix}\frac{\eta}{2p}&0\\0&-\frac{\eta}{2p}\end{pmatrix}$. 

\subsection{Interpretation of $Z(U_{x})_{K}$} As we have seen in Example \ref{example}, we have $H={\rm GSpin}(V)={\rm GL}_{2}$ and 
$$
X_{K}=H(\mathbb{Q})\backslash\left(\mathbb{D}\times H(\mathbb{A}_{f})/K\right)\cong \Gamma_{0}(p)\backslash\mathbb{H}=Y_{0}(p).
$$
For $r\in\mathbb{Q}_{>0}$ and $\eta\in\mathbb{Z}/2p\mathbb{Z}$, define
$$
L_{\eta}(r)=\{x\in\eta+L|\,Q(x)=r\}.
$$
Now let $-D$ be a negative fundamental discriminant with $-D=\mu^{2}+{4pm}$ for some $\mu\in\mathbb{Z}/2p\mathbb{Z}$. Take $x\in L_{\mu}(D/4p)$ to be
$$
x=\begin{pmatrix}\frac{\mu}{2p}&\frac{1}{p}\\m&-\frac{\mu}{2p}\end{pmatrix}.
$$
 We can obtain an associated subspace $U_{x}=x^{\perp}$ of signature $(0,2)$, and the associated CM-cycle of $X_{K}$
$$
Z(U_{x})_{K}=T(\mathbb{Q})\backslash\left(\mathbb{D}_{x}\times T(\mathbb{A}_{f})/K_{T}\right)\hookrightarrow H(\mathbb{Q})\backslash\left(\mathbb{D}\times H(\mathbb{A}_{f})/K\right)\cong Y_{0}(p).
$$
where $T={\rm GSpin}(U_{x})$, $\mathbb{D}_{x}$ is the two-point subset of $\mathbb{D}$ given by $U_{x}$ and $K_{T}=K\cap T(\mathbb{A}_{f})$. In particular, $T(\mathbb{Q})\cong \mathbb{Q}(\sqrt{-D})^{\times}$. In this subsection, we aim to interpret $Z(U_{x})_{K}$ as a CM-cycle of $Y_{0}(p)$. We reply heavily on \cite[Section 6.3]{S2}.

First, we note that $x$ gives rise to a CM point $z_{x}=\frac{\mu+\sqrt{-D}}{2pm}$ in $\mathbb{H}$.
Let $k$ denote the imaginary quadratic field $\mathbb{Q}(\sqrt{-D})$. By the theory of complex multiplication \cite{Sh}, there is an embedding $\phi_{x}:k\hookrightarrow {\rm M}_{2}(\mathbb{Q})$ such that
$$
\phi_{x}(u)\begin{pmatrix}\frac{\mu+\sqrt{-D}}{2pm}\\1\end{pmatrix}=u\begin{pmatrix}\frac{\mu+\sqrt{-D}}{2pm}\\1\end{pmatrix}.
$$
Then one can check that
$$
\phi_{x}\left(\frac{-D+\sqrt{-D}}{2}\right)=\begin{pmatrix}\frac{\mu-D}{2}&\frac{-D-\mu^{2}}{4pm}\\pm&\frac{-D-\mu}{2}\end{pmatrix}.
$$
Thus,
\begin{equation}
\label{phix}
\phi_{x}\left(v+w\frac{-D+\sqrt{-D}}{2}\right)=\begin{pmatrix}v+\frac{\mu-D}{2}w&\frac{-D-\mu^{2}}{4pm}w\\pmw&v+\frac{-D-\mu}{2}w\end{pmatrix}.
\end{equation}
Let $R_{0}=\begin{pmatrix}\mathbb{Z}&\mathbb{Z}\\p\mathbb{Z}&\mathbb{Z}\end{pmatrix}$ be an order of conductor $p$ of ${\rm M}_{2}(\mathbb{Q})$. Then one can check that $\phi_{x}^{-1}(R_{0})=\mathcal{O}_{k}$ since $D$ and $\mu$ have the same parity, and $\mathcal{O}_{k}$ is the maximal order of $k$.

\begin{Lemma}
\label{KO}
$K_{T}\cong\hat{\mathcal{O}}_{k}^{\times}$.
\end{Lemma}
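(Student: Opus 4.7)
\emph{Plan of proof.} The strategy is to realize both $K$ and $T(\mathbb{A}_{f})$ explicitly inside $H(\mathbb{A}_{f}) = \mathrm{GL}_{2}(\mathbb{A}_{f})$ and then intersect directly, using the identity $\phi_{x}^{-1}(R_{0}) = \mathcal{O}_{k}$ already established just before the lemma.

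First, observe that the open compact subgroup $K$ from Example~\ref{example} equals the unit group of the Eichler order $\hat{R}_{0} := R_{0} \otimes_{\mathbb{Z}} \hat{\mathbb{Z}}$. Indeed, for $g = \begin{pmatrix}a&b\\c&d\end{pmatrix} \in \hat{R}_{0}$, the standard formula $g^{-1} = (ad-bc)^{-1}\begin{pmatrix}d&-b\\-c&a\end{pmatrix}$ shows that $g^{-1}\in\hat{R}_{0}$ if and only if $\det g \in \hat{\mathbb{Z}}^{\times}$, because the condition $c \in p\hat{\mathbb{Z}}$ is automatically preserved by inversion once the determinant is a unit.

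Second, because $V_{+} = \mathbb{Q} x$ is one-dimensional and $U_{x} = V_{+}^{\perp}$, the embedding $T = \mathrm{GSpin}(U_{x}) \hookrightarrow \mathrm{GSpin}(V) = \mathrm{GL}_{2}$ induced by the orthogonal decomposition $V = V_{+} \oplus U_{x}$ realizes $T$ as the pointwise stabilizer of $V_{+}$ under the conjugation action of $\mathrm{GL}_{2}$ on $V$, i.e.\ as the centralizer of $x$ in $\mathrm{GL}_{2}$. A direct computation from the defining matrix of $x$ gives $x^{2} = -\tfrac{D}{4p^{2}} \cdot I$, so $\mathbb{Q}[x] \cong k$ via $x \mapsto \sqrt{-D}/(2p)$, and the embedding $\phi_{x}$ of \eqref{phix} is precisely the inverse of this identification. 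Extending to the adeles,
$$
T(\mathbb{A}_{f}) \;=\; \phi_{x}\bigl((k \otimes_{\mathbb{Q}} \mathbb{A}_{f})^{\times}\bigr) \;=\; \phi_{x}(\hat{k}^{\times}),\qquad \hat{k} := k \otimes_{\mathbb{Q}} \mathbb{A}_{f}.
$$

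Finally, tensor $\phi_{x}^{-1}(R_{0}) = \mathcal{O}_{k}$ with $\hat{\mathbb{Z}}$ to obtain $\phi_{x}^{-1}(\hat{R}_{0}) = \hat{\mathcal{O}}_{k}$. Then $\alpha \in \hat{k}^{\times}$ satisfies $\phi_{x}(\alpha) \in K = \hat{R}_{0}^{\times}$ if and only if $\phi_{x}(\alpha)\in\hat{R}_{0}$ and $\det\phi_{x}(\alpha) = N_{k/\mathbb{Q}}(\alpha) \in \hat{\mathbb{Z}}^{\times}$, which characterizes exactly $\alpha \in \hat{\mathcal{O}}_{k}^{\times}$. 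Hence $K_{T} = T(\mathbb{A}_{f}) \cap K = \phi_{x}(\hat{\mathcal{O}}_{k}^{\times}) \cong \hat{\mathcal{O}}_{k}^{\times}$, as claimed.

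\emph{Expected main obstacle.} The only nontrivial point is the compatibility asserted in the second paragraph: that the intrinsic Clifford-theoretic embedding $\mathrm{GSpin}(U_{x}) \hookrightarrow \mathrm{GSpin}(V)$ coincides, under the isomorphism $\mathrm{GSpin}(V) \cong \mathrm{GL}_{2}$ of Example~\ref{example}, with the concrete centralizer embedding furnished by $\phi_{x}$. This is a standard consequence of how the even Clifford algebra of $U_{x}$ sits inside that of $V$, but it is the only step requiring care; once it is granted, the identification $K_{T} \cong \hat{\mathcal{O}}_{k}^{\times}$ is immediate from the bookkeeping above.
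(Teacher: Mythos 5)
Your argument is correct and follows essentially the same route as the paper: both rest on the optimal embedding property $\phi_{x}^{-1}(R_{0})=\mathcal{O}_{k}$, the identification of $T(\mathbb{A}_{f})$ with $\hat{k}^{\times}$ via $\phi_{x}$, and then a direct intersection with $K=\hat{R}_{0}^{\times}$. You merely spell out the two inclusions via the determinant/norm criterion where the paper invokes maximality of $\mathcal{O}_{k}$ for the reverse inclusion; these are interchangeable finishing touches to the same proof.
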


\begin{proof}
By \eqref{phix}, one can check that $\phi_{x}^{-1}(K)=\hat{\mathcal{O}}_{k}$. So $\hat{\mathcal{O}}_{k}^{\times}\subset \phi_{x}^{-1}(K_{T})$. Since ${\mathcal{O}}_{k}$ is the maximal order of $k$, then $\hat{\mathcal{O}}_{k}^{\times}=\phi_{x}^{-1}(K_{T})$.
\end{proof}

\begin{Proposition}
\label{ZU}
The CM-cycle $Z(U_{x})_{K}$ of $X_{K}$
$$
Z(U_{x})_{K}=T(\mathbb{Q})\backslash\left(\mathbb{D}_{x}\times T(\mathbb{A}_{f})/K_{T}\right)
$$
is identified with
$$
Z(D/4p,\mu):=\sum_{Q_{D}\in \mathcal{Q}^{prim}_{D,p,\mu}/\Gamma_{0}(p)}[\tau_{Q_{D}}]=2\sum_{Q_{D}\in \mathcal{Q}^{prim,+}_{D,p,\mu}/\Gamma_{0}(p)}[\tau_{Q_{D}}]
$$
in $Y_{0}(p)$. 
\end{Proposition}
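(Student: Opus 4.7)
My strategy is to unwind both sides of the claimed identification by an explicit calculation that relies on the embedding $\phi_{x}: k \hookrightarrow {\rm M}_{2}(\mathbb{Q})$ of \eqref{phix}, following the template of \cite[\S 6.3]{S2}. First I would replace $\mathbb{D}_{x}$ and $T(\mathbb{A}_{f})/K_{T}$ by their concrete descriptions, then transport each resulting point of the CM-cycle into $Y_{0}(p)$ under $X_{K} \cong Y_{0}(p)$ and read off the associated binary quadratic form.

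To handle the adelic factor, I would combine Lemma \ref{KO} with the identification $T(\mathbb{Q}) \cong k^{\times}$ to obtain
$$
T(\mathbb{Q}) \backslash T(\mathbb{A}_{f})/K_{T} \cong k^{\times} \backslash \hat{k}^{\times}/\hat{\mathcal{O}}_{k}^{\times} \cong {\rm Cl}(\mathcal{O}_{k}),
$$
a set of cardinality $h(-D)$. Since $T(\mathbb{R}) \cong \mathbb{C}^{\times}$ is connected and ${\rm GSpin}$ maps into ${\rm SO}$, the group $T(\mathbb{Q})$ acts trivially on the two-point set $\mathbb{D}_{x} = \{z_{x}, \bar{z}_{x}\}$, so
$$
Z(U_{x})_{K} \cong \mathbb{D}_{x} \times {\rm Cl}(\mathcal{O}_{k})
$$
as a set of $2h(-D)$ geometric points, already matching the cardinality of $Z(D/4p, \mu)$. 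For a representative idele $g_{\mathfrak{a}}$ of a class $[\mathfrak{a}] \in {\rm Cl}(\mathcal{O}_{k})$, I would use strong approximation to write $\phi_{x}(g_{\mathfrak{a}}) = \gamma \kappa$ with $\gamma \in H(\mathbb{Q})^{+}$ and $\kappa \in K$; then under $X_{K} \cong Y_{0}(p)$, the class of $(z_{x}, \phi_{x}(g_{\mathfrak{a}}))$ is represented by $\gamma^{-1} \cdot z_{x} \in \mathbb{H}$.

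A direct lattice computation using \eqref{phix} would then show that $\gamma^{-1} \cdot z_{x}$ is the root in $\mathbb{H}$ of a primitive, positive-definite binary quadratic form $Q_{\mathfrak{a}} = aX^{2} + bXY + cY^{2}$ of discriminant $-D$ with $p \mid a$ and $b \equiv \mu \pmod{2p}$: the divisibility $p \mid a$ comes from the lower-left entry of $L$ lying in $p\mathbb{Z}$; the congruence on $b$ is inherited from the $(1,1)$-entry $\mu/(2p)$ of the chosen $x$; and primitivity of $Q_{\mathfrak{a}}$ follows from $\mathfrak{a}$ being a proper invertible ideal of the maximal order $\mathcal{O}_{k}$. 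The classical bijection between proper $\mathcal{O}_{k}$-ideal classes and primitive ${\rm SL}_{2}(\mathbb{Z})$-classes of discriminant-$(-D)$ forms, refined by passage to the $\Gamma_{0}(p)$-action, then shows that $[\mathfrak{a}] \mapsto [Q_{\mathfrak{a}}]$ is a bijection onto $\mathcal{Q}_{D,p,\mu}^{prim,+}/\Gamma_{0}(p)$. The second orientation $\bar{z}_{x} \in \mathbb{D}_{x}$ yields the same CM point in $Y_{0}(p)$ after applying an orientation-reversing element of $H(\mathbb{Q})$, which is what accounts for the factor of $2$ in $Z(D/4p,\mu) = 2\sum_{Q_{D}} [\tau_{Q_{D}}]$.

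The hardest step will be the lattice bookkeeping that verifies the congruence $b \equiv \mu \pmod{2p}$ uniformly as $\mathfrak{a}$ ranges over ${\rm Cl}(\mathcal{O}_{k})$ and that the induced correspondence genuinely descends to $\Gamma_{0}(p)$-orbits of forms rather than merely ${\rm SL}_{2}(\mathbb{Z})$-orbits; this is ultimately an explicit exercise in the interplay between $x \in L_{\mu}(D/4p)$, the embedding $\phi_{x}$, and the lattice $L$.
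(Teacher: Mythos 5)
Your outline is essentially sound and reaches the right answer, but it takes a genuinely different route from the paper. You unwind the cycle pointwise: you split off the two-point set $\mathbb{D}_{x}$ (using connectedness of $T(\mathbb{R})$), identify $T(\mathbb{Q})\backslash T(\mathbb{A}_{f})/K_{T}$ with ${\rm Cl}(\mathcal{O}_{k})$ via Lemma \ref{KO}, move each point into $\mathbb{H}$ by strong approximation, and then match $\gamma^{-1}z_{x}$ with the root of an explicit form $Q_{\mathfrak{a}}$ through the classical ideal--form dictionary. The paper instead never touches individual ideal classes: it runs a surjection-plus-counting argument through the set $M$ of orders $R$ with $\phi_{x}^{-1}(R)=\mathcal{O}_{k}$ (optimal embeddings), using Witt's theorem and \cite[Prop.\ 6.18, Thm.\ 6.16, Prop.\ 6.23]{S2} to produce a surjection
$$
\Gamma_{p}\backslash\bigl(L_{\mu}(D/4p)\sqcup L_{-\mu}(D/4p)\bigr)\longrightarrow T(\mathbb{Q})\backslash T(\mathbb{A}_{f})/K_{T},
$$
and then invoking $|\Gamma_{0}(p)\backslash\mathcal{Q}_{D,p,\mu}^{prim,+}|=h(-D)$ (Hirzebruch--Zagier) to upgrade surjectivity to bijectivity. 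Your approach buys explicitness (you would actually see which $\tau_{Q_{D}}$ each adelic point lands on), while the paper's buys brevity by outsourcing the transitivity and well-definedness statements to Schofer's thesis. Two cautions if you carry your plan out. First, your phrase ``refined by passage to the $\Gamma_{0}(p)$-action'' conceals the same counting input the paper uses: the classical bijection is with ${\rm SL}_{2}(\mathbb{Z})$-classes, and to conclude that $[\mathfrak{a}]\mapsto[Q_{\mathfrak{a}}]$ is a bijection onto $\mathcal{Q}_{D,p,\mu}^{prim,+}/\Gamma_{0}(p)$ you will still need $|\mathcal{Q}_{D,p,\mu}^{prim,+}/\Gamma_{0}(p)|=h(-D)$ (as in \cite{HZ}) or an explicit inverse. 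Second, your claim that the second orientation $\bar{z}_{x}$ ``yields the same CM point'' needs the orientation-reversing element to lie in the determinant $-1$ part of $\Gamma_{p}$ (not merely in $H(\mathbb{Q})$), which is precisely why the paper works with $\Gamma_{p}\subset{\rm GL}_{2}(\mathbb{Z})$ and the union $L_{\mu}\sqcup L_{-\mu}$; as stated, it is also possible that $(\bar{z}_{x},[\mathfrak{a}])$ matches $(z_{x},[\bar{\mathfrak{a}}])$ rather than $(z_{x},[\mathfrak{a}])$, which is harmless for the divisor identity but should be said carefully.
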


\begin{proof}
First it is known by Witt's Theorem that for any $\tilde{x}\in L_{\mu}(D/4p)$, there is a $\gamma\in H(\mathbb{Q})$ such that $\tilde{x}=\gamma\cdot x$. Let 
$$
M=\{R\subset {\rm M}_{2}(\mathbb{Q})|\,\mbox{$M$ is an order and $\phi_{x}^{-1}(R)=\mathcal{O}_{k}$}\}.
$$
Then by \cite[Prop. 6.18]{S2}, there is a well-defined surjective map
$$
L_{\mu}(D/4p)\sqcup L_{-\mu}(D/4p)\to T(\mathbb{Q})\backslash M
$$
sending $\tilde{x}\to[\gamma^{-1}R_{0}]$. So we have a well-defined surjective map
$$
\Gamma_{p}\backslash \left(L_{\mu}(D/4p)\sqcup L_{-\mu}(D/4p)\right)\to T(\mathbb{Q})\backslash M,
$$
where 
$$
\Gamma_{p}=\left\{\left.\begin{pmatrix}a&b\\c&d\end{pmatrix}\in {\rm GL}_{2}(\mathbb{Z})\right|\, c\equiv0\pmod{p}\right\}.
$$
  Since $T(\mathbb{A}_{f})$ acts transitively on $M$ \cite[Thm. 6.16]{S2}, and the stabilizer of $M$ in $H(\mathbb{A}_{f})$ is $KZ(\mathbb{A}_{f})$, then one has 
$$
M\cong T(\mathbb{A}_{f})/K_{T}Z(\mathbb{A}_{f}).
$$
Since $H={\rm GL}_{2}$, then $Z(\mathbb{A}_{f})=\mathbb{Q}^{\times}\hat{\mathbb{Z}}^{\times}$.
Furthermore, by Lemma \ref{KO}, we have $K_{T}\cong\hat{\mathcal{O}}_{k}^{\times}$. So all of these tell that
$$
M\cong T(\mathbb{A}_{f})/\hat{\mathcal{O}}_{k}^{\times}\cong T(\mathbb{A}_{f})/K_{T},
$$
and thus we have a well-defined surjective map
$$
\Gamma_{p}\backslash \left(L_{\mu}(D/4p)\sqcup L_{-\mu}(D/4p)\right)\to T(\mathbb{Q})\backslash T(\mathbb{A}_{f})/K_{T}\cong k^{\times}\backslash k_{\mathbb{A}_{f}}^{\times}/\hat{\mathcal{O}}_{k}^{\times}.
$$
In addition, one can check that 
$$
\Gamma_{p}\backslash\left( L_{\mu}(D/4p)\sqcup L_{-\mu}(D/4p)\right)\cong\Gamma_{0}(p)\backslash \mathcal{Q}_{D,p,\mu}^{prim,+},
$$
and by \cite[Lem. 2]{HZ} or \cite[Prop. p. 505]{GKZ}, it is known that $|\Gamma_{0}(p)\backslash \mathcal{Q}_{D,p,\mu}^{prim,+}|=h(-D)$, where $h(-D)$ denotes the class number of the imaginary quadratic field $\mathbb{Q}(\sqrt{-D})$. So we have the isomorphisms
$$
\Gamma_{p}\backslash\left( L_{\mu}(D/4p)\sqcup L_{-\mu}(D/4p)\right)\cong T(\mathbb{Q})\backslash T(\mathbb{A}_{f})/K_{T}\cong k^{\times}\backslash k_{\mathbb{A}_{f}}^{\times}/\hat{\mathcal{O}}_{k}^{\times}.
$$
Finally, this together with \cite[Prop. 6.23]{S2} implies Proposition \ref{ZU}.
\end{proof}

\begin{Proposition}
\label{proplog}
Let $p$ be a prime such that $\Gamma^{*}_{0}(p)$ is of genus zero, and let $j_{p}^{*}(\tau)$ be a Hauptmodul for $\Gamma^{*}_{0}(p)$. 
\begin{align*}
&\log\left(\prod_{Q_{D}\in \mathcal{Q}^{prim,+}_{D,p,\mu}/\Gamma_{0}(p)}\prod_{Q_{d}\in \mathcal{Q}^{prim,+}_{d,p,\beta}/\Gamma_{0}(p)}|j_{p}^{*}(\tau_{Q_{D}})-j_{p}^{*}(\tau_{Q_{d}})|^{8}\right)\\
&=-{h(-D)}\left(K_{\beta}(d/4p)+K_{-\beta}(d/4p)\right),
\end{align*}
where $h(-D)$ denotes the class number of the imaginary quadratic field $\mathbb{Q}(\sqrt{-D})$, and $K_{\eta}(m)$ is defined as in \eqref{KM}.
\end{Proposition}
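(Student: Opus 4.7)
The plan is to combine Kim's Borcherds form identity (Theorem~\ref{kim}) with Schofer's small CM value formula (Theorem~\ref{scho}), evaluated on the CM-cycle $Z(U_x)_K$ identified in Proposition~\ref{ZU}. First I would invoke Theorem~\ref{kim} with parameters $(d,p,\beta)$ to obtain a weakly holomorphic vector-valued modular form $\vec{F}_{d,p,\beta}$ of weight $1/2$ and type $\rho_L$, with $c(0,\eta)=0$ and principal part $q^{-d}\phi_{\beta}+q^{-d}\phi_{-\beta}$, such that under the identification of Example~\ref{example},
\[
\prod_{Q_d\in\mathcal{Q}^{prim,+}_{d,p,\beta}/\Gamma_0(p)}\bigl(j_p^{*}(\tau)-j_p^{*}(\tau_{Q_d})\bigr)^{2}=\Psi(z,h;\vec{F}_{d,p,\beta}).
\]
Passing to $\log|\cdot|^{2}$ on both sides gives $\log\prod_{Q_d}|j_p^{*}(\tau)-j_p^{*}(\tau_{Q_d})|^{4}=\log|\Psi(z,h;\vec{F}_{d,p,\beta})|^{2}$.

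Next, I would evaluate this identity at each $\tau=\tau_{Q_D}$ for $Q_D\in\mathcal{Q}^{prim,+}_{D,p,\mu}/\Gamma_0(p)$ and sum. By Proposition~\ref{ZU}, the 0-cycle $Z(U_x)_K$ equals $2\sum_{Q_D}[\tau_{Q_D}]$, so
\[
\sum_{z\in Z(U_x)_K}\log|\Psi(z;\vec{F}_{d,p,\beta})|^{2}=2\sum_{Q_D}\log\prod_{Q_d}|j_p^{*}(\tau_{Q_D})-j_p^{*}(\tau_{Q_d})|^{4}=\log\prod_{Q_D}\prod_{Q_d}|j_p^{*}(\tau_{Q_D})-j_p^{*}(\tau_{Q_d})|^{8},
\]
which is precisely the left-hand side of Proposition~\ref{proplog}. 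Applying Theorem~\ref{scho} to $\vec{F}_{d,p,\beta}$, the hypothesis $c(0,\eta)=0$ kills the $m=0$ contribution, and the principal-part data leaves only the two Fourier coefficients at $\eta=\pm\beta$ of index $-d/(4p)$, so the formula collapses to
\[
\sum_{z\in Z(U_x)_K}\log|\Psi(z;\vec{F}_{d,p,\beta})|^{2}=-|T(\mathbb{Q})\backslash T(\mathbb{A}_f)/K_T|\bigl(K_{\beta}(d/4p)+K_{-\beta}(d/4p)\bigr).
\]

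Finally, to identify $|T(\mathbb{Q})\backslash T(\mathbb{A}_f)/K_T|$ with $h(-D)$, I would use that under $\phi_x\colon k\hookrightarrow {\rm M}_{2}(\mathbb{Q})$ one has $T(\mathbb{Q})\cong k^{\times}$ and $T(\mathbb{A}_f)\cong k_{\mathbb{A}_f}^{\times}$; combined with Lemma~\ref{KO}, which gives $K_T\cong\hat{\mathcal{O}}_k^{\times}$, class field theory identifies the index with the ideal class number $|\mathrm{Cl}(\mathcal{O}_k)|=h(-D)$. The main obstacle is the bookkeeping of three separate doublings---the square in Kim's identity $\Psi=\prod(j-j')^{2}$, the $|\Psi|^{2}$ on the left of Schofer's formula, and the multiplicity $2$ in $Z(U_x)_K=2\sum[\tau_{Q_D}]$---which together produce the eighth power on the left, together with reconciling the Fourier coefficient conventions of Theorems~\ref{kim} and~\ref{scho} so that $K_{\pm\beta}$ is correctly indexed at $m=d/(4p)$ (consistent with the quadratic form $Q$ on $L'/L$).
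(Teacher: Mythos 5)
Your proposal is correct and follows essentially the same route as the paper: the paper's proof likewise combines Theorem~\ref{kim} and Theorem~\ref{scho} to get $\sum_{z\in Z(U_{x})_{K}}\log|\Psi(z;\vec{F}_{d,p,\beta})|^{2}=-|T(\mathbb{Q})\backslash T(\mathbb{A}_{f})/K_{T}|(K_{\beta}(d/4p)+K_{-\beta}(d/4p))$, and then invokes Lemma~\ref{KO} and Proposition~\ref{ZU} to identify the cycle with $2\sum_{Q_{D}}[\tau_{Q_{D}}]$ and the index with $h(-D)$. Your accounting of the three factors of $2$ producing the eighth power, and your remark about indexing $K_{\pm\beta}$ at $m=d/(4p)$, simply make explicit what the paper leaves implicit.
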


\begin{proof}
By Theorems \ref{kim} and \ref{scho}, we can deduce that
\begin{align*}
&\sum_{z\in Z(U_{x})_{K}}\left(\log \left|\Psi\left(z;\vec{F}_{d,p,\beta}\right)\right|^{2}\right)\\
&=-{|T(\mathbb{Q})\backslash T(\mathbb{A}_{f})/K_{T}|}\left(K_{\beta}(d/4p)+K_{-\beta}(d/4p)\right).
\end{align*}
This together with Lemma \ref{KO} and Proposition \ref{ZU} implies the desired formula
\begin{align*}
&\sum_{Q_{D}\in \mathcal{Q}^{prim,+}_{D,p,\mu}/\Gamma_{0}(p)}\left(\log \prod_{Q_{d}\in \mathcal{Q}^{prim,+}_{d,p,\beta}/\Gamma_{0}(p)}|j_{p}^{*}(\tau_{Q_{D}})-j_{p}^{*}(\tau_{Q_{d}})|^{8}\right)\\
&=-{h(-D)}\left(K_{\beta}(d/4p)+K_{-\beta}(d/4p)\right).
\end{align*}
\end{proof}
\begin{proof}[Proof of Theorem \ref{main}]
These follow from Proposition \ref{proplog}, Theorem \ref{coeff}, \eqref{KM}, and \eqref{K1} as computed in the following subsection.
\end{proof}

\subsection{Lattice Computations}  In this subsection, we explicitly compute the lattices $L_{+}$ and $L_{-}$ for our case. Recall that
$$
  x=\begin{pmatrix}\frac{\mu}{2p}&\frac{1}{p}\\m&-\frac{\mu}{2p}\end{pmatrix}.
  $$
    We can easily compute that $U_{x}=\mathbb{Q}e_{1}+\mathbb{Q}e_{2}$ where
$$
e_{1}=\begin{pmatrix}1&0\\-\mu&-1\end{pmatrix}\quad\mbox{and}\quad e_{2}=\begin{pmatrix}0&\frac{1}{p}\\-m&0\end{pmatrix}.
$$
Then one can easily check that
\begin{equation}
\label{L+}
L_{+}=L\cap \mathbb{Q}x=\mathbb{Z}\frac{2p}{g}x,
\end{equation}
where $g=\gcd(\mu,2p)$,
and
\begin{equation}
\label{L-}
L_{-}=L\cap U_{x}=\mathbb{Z}e_{1}+\mathbb{Z}e_{2}.
\end{equation}
Then $L_{-}$ can be identified with the ideal $\mathfrak{a}=[p,\frac{\mu+\sqrt{-D}}{2}]$ in $\mathcal{O}_{k}$ as a quadratic $\mathbb{Z}$-lattice with the quadratic form $Q(z)~=~-\frac{z\bar{z}}{p}$ as mentioned after Definition~\ref{defE}.
Recall that ${\rm pr}_{\pm}$ denotes the projections of $V$ onto $\mathbb{Q}x$ and $U_{x}$, respectively, and $\lambda_{\pm}={\rm pr}_{\pm}(\lambda)$. 
For $\lambda=\begin{pmatrix}b&-a/p\\c&-b\end{pmatrix}\in L$, we have
$$
\lambda_{+}=\frac{2p(am-b\mu-c)}{D}x
$$
and
$$
\lambda_{-}=\frac{Db-\mu(am-b\mu-c)}{D}e_{1}+\frac{-Da-2p(am-b\mu-c)}{D}e_{2}.
$$
Setting $y=am-b\mu-c$ and modding out by $L_{+}+L_{-}$, we obtain, by \eqref{L+} and \eqref{L-},
\begin{equation}
\label{LLL}
L/(L_{+}+L_{-})=\{\lambda+(L_{+}+L_{-})\}=\left\{\frac{2py}{D}x+\left(-\frac{\mu y}{D}\right)e_{1}+\left(-\frac{2py}{D}\right)e_{2}\right\},
\end{equation}
where $0\leq y<\frac{D}{g}$.
Also, for $\beta\in L'/L$ with
$$
\beta=\begin{pmatrix}\beta/2p&0\\0&-\beta/2p\end{pmatrix},
$$
we have 
$$
\beta_{+}=-\frac{\mu\beta}{D}x\quad\mbox{and}\quad \beta_{-}=-\frac{2m\beta}{D}e_{1}+\frac{\mu\beta}{D}e_{2}.
$$
Then we can express $\beta_{+}+\lambda_{+}$ as
\begin{align*}
\beta_{+}+\lambda_{+}&=\frac{2py-\mu\beta}{D}\begin{pmatrix}\frac{\mu}{2p}&\frac{1}{p}\\m&-\frac{\mu}{2p}\end{pmatrix},
\end{align*}
and the elements of $\beta_{+}+\lambda_{+}+L_{+}$ can be written as
$$
\ell=\frac{2py-\mu\beta}{D}\begin{pmatrix}\frac{\mu}{2p}&\frac{1}{p}\\m&-\frac{\mu}{2p}\end{pmatrix}+\frac{2pn}{g}x
$$
for $n\in\mathbb{Z}$.
Thus for $\ell\in\beta_{+}+\lambda_{+}+L_{+}$, we have
\begin{equation}
\label{QL}
Q(\ell)=\frac{(g\mu\beta-2npD-2gpy)^{2}}{4g^{2}pD}.
\end{equation}
Therefore, by \eqref{LLL} and \eqref{QL}, we can rewrite
\begin{align*}
K_{\beta}(d/4p)&=\sum_{\lambda\in L/(L_{+}+L_{-})}\sum_{\ell\in\beta_{+}+\lambda_{+}+L_{+}}\kappa_{\beta_{-}+\lambda_{-}}\left(\frac{d}{4p}-Q(\ell)\right)
\end{align*}
as 
\begin{equation}
\label{K1}
K_{\beta}(d/4p)=\sum_{\substack{n\in\mathbb{Z}\\0\leq y<D/g\\|g\mu\beta-2npD-2gpy|<g\sqrt{dD}}}\kappa_{\beta_{-}+\lambda_{-}}\left(\frac{d}{4p}-\frac{(g\mu\beta-2npD-2gpy)^{2}}{4g^{2}pD}\right).
\end{equation}

\begin{Remark}
If we assume $D$ and $d$ to be coprime, similar to \cite[Section 7.3]{S2}, one may simplify the formula in Theorem \ref{main}. We leave the details to the reader.
\end{Remark}

\section{Hilbert Class Polynomials}
In this section, we close this work with illustrating how to employ Theorem \ref{main} to construct some  Hilbert class polynomials.

Take a negative fundamental discriminant $-\mathcal{D}\equiv\alpha^{2}\pmod{4p}$ for some $\alpha\in\mathbb{Z}/2p\mathbb{Z}$ with $h(-\mathcal{D})=~1$. Assume that for a Hauptmodul $\tilde{j}_{p}^{*}(\tau)$, we have $\tilde{j}_{p}^{*}(i\infty)=\infty$ and $\tilde{j}_{p}^{*}(\tau_{Q_{\mathcal{D}}})=z_{0}$. Then take a linear fractional transformation $\mathcal{T}(\tau)$ such that $\mathcal{T}(\infty)=\infty$ and $\mathcal{T}(z_{0})=0$, and so clearly, $j_{p}^{*}(\tau)=\mathcal{T}(\tilde{j}_{p}^{*}(\tau))$ is a Hauptmodul such that $j_{p}^{*}(\tau_{Q_{\mathcal{D}}})=0$.
Then by Theorem \ref{kim}, one can see that $j_{p}^{*}(\tau)$
is a Borcherds form $\Psi(z,h;\vec{F}_{\mathcal{D},p,\alpha})$.

  For any negative fundamental discriminant $-d\equiv\beta^{2}\pmod{4p}$ with $\beta\in\mathbb{Z}/2p\mathbb{Z}$, we have a Hilbert class polynomial
$$
Y=\prod_{Q_{d}\in\mathcal{Q}_{d,p,\beta}^{prim,+}/\Gamma_{0}(p)}\left(X-j_{p}^{*}(\tau_{Q_{d}})\right)
$$
of degree $h(-d)$. So in order to determine the Hilbert class polynomial explicitly, we need $h(-d)+1$ pairs of values $(X,Y)$. Let $-D$ be a negative fundamental discriminant. If
\begin{equation}
\label{limit}
h(-d)+1\leq |S(p)|,
\end{equation}
where
$$
S(p):=\left\{\text{$-D$}|\,\text{$h(-D)=1$ and $-D\equiv\mu^{2}\pmod{4p}$ for some $\mu\in\mathbb{Z}/2p\mathbb{Z}$}\right\},
$$
then one can use Theorem \ref{main} to get $h(-d)+1$ pairs
$$
X_{D}=\pm|j^{*}_{p}(\tau_{Q_{D}})|\quad\mbox{and}\quad Y_{D}=\pm\prod_{Q_{d}\in\mathcal{Q}_{d,p,\beta}^{prim,+}/\Gamma_{0}(p)}\left|j_{p}^{*}(\tau_{Q_{D}})-j_{p}^{*}(\tau_{Q_{d}})\right|,
$$
where the signs can be determined algebraically.

\begin{Example}
Take $-\mathcal{D}=-11$ with $-11\equiv41^{2}\pmod{188}$. Let $j_{47}^{*}(\tau)$ be a Hauptmodul on $\Gamma_{0}^{*}(47)$ such that $j_{47}^{*}(\tau_{Q_{11}})=0$, where
$$
\tau_{Q_{11}}=\frac{-41+\sqrt{-11}}{94}.
$$
One can check that $S(47)=\{-11,-19,-43,-67,-163\}$,
so we aim to determine a Hilbert class polynomial for $\mathbb{Q}(\sqrt{-39})$ of degree $h(-39)=4$. Following the above procedure and using Theorem~\ref{main}, we can compute that
\begin{align*}
(X_{11},Y_{11})&=(0,1),\quad (X_{19},Y_{19})=(1,1),\quad(X_{43},Y_{43})=(-1,7),\\
&(X_{67},Y_{67})=(2,13),\quad (X_{163},Y_{163})=(4,217).
\end{align*}
Then all of these yield a Hilbert class polynomial for $\mathbb{Q}(\sqrt{-39})$, namely, $Y=X^{4}-X^{3}+2X^{2}-2X+1$.
\end{Example}

\begin{Remark}
Similarly, we can apply Gross-Zagier CM value formula to construct certain Hilbert class polynomials related to $j(\tau)$, and this has been recently pointed out in \cite{E} by Errthum, in which the Hilbert class polynomial of $\mathbb{Q}(\sqrt{-39})$ he obtained is read as
\begin{align*}
Y=&X^{4}+331531596X^{3}-429878960946X^{2}+109873509788637459X\\ 
&+20919104368024767633.
\end{align*}
Apparently, the Hilbert class polynomials induced by $j_{p}^{*}(\tau)$ have some advantages over those for $j(\tau)$ in the sense that they have much smaller coefficients, discriminants and resultants.
\end{Remark}

\begin{Remark}
Another possible way to construct a Hilbert class polynomial via $j_{p}^{*}(\tau)$ is given in \cite{Eh} by Ehlen, in which he gives a formula expressing the value $j_{p}^{*}(\tau_{Q_{d}})$ with $\tau_{Q_{d}}=\frac{\rho+\sqrt{-d}}{2p}$ in terms of the prime ideals of the corresponding Hilbert class field. One can obtain similar formulas for the Galois conjugates of $j_{p}^{*}(\tau_{Q_{d}})$ using Shimura's reciprocity law \cite{Sh}. Understanding the corresponding Hilbert class field and its spectrum, one can explicitly compute $j_{p}^{*}(\tau_{Q_{d}})$ and its Galois conjugates and thus compute the associated Hilbert class polynomial.
\end{Remark}

{\bf Acknowledgment} The author thanks his advisor, Prof. Tonghai Yang, for his support and encouragement. The author would also like to the anonymous reader for his/her helpful comments.


\begin{thebibliography}{99}

\bibitem{BBEL}{J. Belding, R. Br$\ddot{\mbox{o}}$ker, A. Enge, and K. Lauter,
\emph{Computing Hilbert class polynomials}, Algorithmic Number Theory Symposium-ANTS VIII (A. J. van der Poorten
and A. Stein, eds.), Lecture Notes in Computer Science, vol. 5011, Springer, 2008, pp. 282--
295.}

\bibitem{Bo}{R. Br$\ddot{\mbox{o}}$ker,
\emph{A $p$-adic algorithm to compute the Hilbert class polynomial}, Math.
Comp.
{\bf77}
(2008), 2417--2435.}

\bibitem{BO}{R. E. Borcherds, \emph{Automorphic forms with singularities on Grassmannians,} Invent. Math., {\bf132} (1998), 491--562.}

\bibitem{B}{A. Borel, S. Chowla, C. Herz, K. Iwasawa, and J.-P. Serre, (eds.), \emph{Seminar on Complex Multiplication}, Lecture Notes in Mathematics {\bf320}, Springer-Verlag, 1973, pp. 177--186.}

\bibitem{BKY}{J. H. Bruinier, S. S. Kudla and T. Yang, \emph{Special values of Green functions at big CM points,} Int. Math. Res. Not., {\bf9} (2012), 1917-1967.}

\bibitem{BS}{J. H. Bruinier and M. Schwagenscheidt, \emph{Algebraic formulas for the coefficients of mock theta functions and Weyl vectors of Borcherds products}, Journal of Algebra, to appear.}

\bibitem{C}{D. Cox, \emph{Primes of the form $x^{2}+ny^{2}$}, John Wiley $\&$ Sons, 1989.}

\bibitem{D}{D. R. Dorman, \emph{Special values of the elliptic modular function and factorization formulae}, J. Reine Angew.
Math. {\bf383} (1988), 207--220.}

\bibitem{Eh}{S. Ehlen, \emph{Singular moduli of higher level and special cycles}, Res. Math. Sci. 2 (2015), Art. 16, 27 pp.}

\bibitem{En}{A. Enge,
\emph{The complexity of class polynomial computation via floating point approximations}, Math. Comp.
{\bf78}
(2009), 1089--1107.}

\bibitem{ES}{A. Enge and A. V. Sutherland, \emph{Class invariants for the CRT method}, Algorithmic Number Theory Symposium--ANTS IX, Lecture Notes in Computer Science 6197 (eds G. Hanrot, F. Morain and E. Thom$\acute{\mbox{e}}$;
Springer, 2010) 142--156.}

\bibitem{E}{E. Errthum, \emph{Minimal polynomials of singular moduli}, Math. Comp. {\bf83} (2014), 411--420.}

\bibitem{GKZ}{B. Gross, W. Kohnen and D. Zagier, \emph{Heegner points and derivatives of $L$-series. II}, Math. Ann., {\bf278} (1987), 407--562.}

\bibitem{HZ}{F. Hirzebruch and D. Zagier, \emph{Intersection numbers of curves on Hilbert modular surfaces and modular forms of Nebentypus}, Invent. Math., {\bf36} (1976), 57--113.}

\bibitem{GZ}{B. Gross and D. Zagier, \emph{On singular moduli}, J. Reine Angew. Math., {\bf355} (1985), 191--220.}

\bibitem{KI}{C. H. Kim, \emph{Traces of singular values and Borcherds products}, Bull. London Math. Soc., {\bf38} (2006), 730--740.}

\bibitem{K}{S. S. Kudla, \emph{Integrals of Borcherds forms}, Compositio Math., {\bf137} (2003), 293--349.}

\bibitem{KY}{S. S. Kudla and T. Yang, \emph{ Eisenstein series for $SL(2)$}, Sci. China Math., {\bf53} (2010), 2275--2316.}

\bibitem{L}{S. Lang, \emph{Elliptic Functions}, Addison-Wesley, 1973.}

\bibitem{LV}{K. Lauter and B. Viray, \emph{On singular moduli for arbitrary discriminants}, Int. Math. Res. Not. 2015, no. 19, 9206--9250.}

\bibitem{S}{J. Schofer, \emph{Borcherds forms and generalizations of singular moduli}, J. Reine Angew Math., {\bf629} (2009), 1--36.}

\bibitem{S2}{J. Schofer, \emph{Borcherds forms and generalizations of singular moduli}. Thesis (Ph.D.) University of Maryland, College Park. 2005. 120 pp. }

\bibitem{Sh}{G. Shimura, \emph{Introduction to the Theory of Automorphic Forms}, Iwanami Publishing Company and Princeton University Press, 1974.}

\bibitem{SU}{A. Sutherland, \emph{Accelerating the CM method}, LMS J. Comput.  Math.
{\bf15} (2012), 172--204.}

\bibitem{Su}{A. Sutherland, \emph{Computing Hilbert class polynomials with the Chinese remainder theorem}, Math. Comp. {\bf80} (2011), 501--538.}

\bibitem{W}{H. Weber, \emph{Algebra, Dritter Band}. Third, reprint. New York, N.Y., Chelsea Publishing Company, 1961.}

\bibitem{YY}{T. Yang and H. Yin, \emph{Difference of modular functions and their CM value factorization,} preprint.}

\bibitem{Y}{D. Ye, \emph{Difference of a Hauptmodul for $\Gamma_{0}(N)$ and certain Gross-Zagier type CM value formulas}, preprint.}

\bibitem{Z}{D. Zagier, \emph{Traces of singular moduli}, In \emph{Motives, Polylogarithms and Hodge Theory, Part I (Irvine, CA, 1998). Vol. 3. Int. Press Lect. Ser. Somerville, MA: Int. Press, 2002, pp. 211--244.}}

\end{thebibliography}
\end{document}